\theoremstyle{theorem}
\newtheorem{theorem}{Theorem}
\newtheorem{proposition}{Proposition}
\newtheorem{lemma}{Lemma}
\newtheorem{corollary}{Corollary}
\newtheorem{conjecture}{Conjecture}
\newtheorem{property}{Property}
\theoremstyle{definition}
\newtheorem{definition}{Definition}
\newtheorem*{remark}{Remark}
\newcommand{\sys}{{\rm sys}}
\newcommand{\R}{{\mathbb R}}
\newcommand{\Z}{{\mathbb Z}}
\newcommand{\BB}{{\mathcal B}}
\newcommand{\arc}{{\mathcal A}}
\newcommand{\T}{{\mathcal T}}
\newcommand{\MM}{{\mathcal M}}
\newcommand{\area}{{\rm area}}
\newcommand{\arcsinh}{{\,\rm arcsinh}}
\newcommand{\arccosh}{{\,\rm arccosh}}
\newcommand{\PP}{\mathcal P}
\newcommand{\FF}{\mathcal F}
\newcommand{\II}{\mathcal I}
\numberwithin{equation}{section}
\begin{document}

%

%% -------------------------------------------------------------------------------

%% Title

%% --------------------------------------------------------------------------------

%

\title{Bers' constants for punctured spheres and hyperelliptic surfaces}

%% -------------------------------------------------------------------------------

%% Authors

%% --------------------------------------------------------------------------------

\author[F.~Balacheff]{Florent Balacheff}

\address[Florent Balacheff] {Laboratoire Paul Painlev\'e, Universit\'e Lille 1\\ Lille, France}

\email{florent.balacheff@math.univ-lille1.fr}

\author[H.~Parlier]{Hugo Parlier}
\address[Hugo Parlier]{Department of Mathematics, University of Toronto\\
  Toronto, Canada}
\email{hugo.parlier@gmail.com}

%\thanks{The second author is grateful for support from grant MTM2006-14688}

%

\date{\today}

%%%%%%%%%%%%%%%%%%%%%%%%%%%%%%%

%%    Abstract
%%%%%%%%%%%%%%%%%%%%%%%%%%%%%%%

%

\begin{abstract} This article is dedicated to proving Buser's conjecture about Bers' constants for spheres with cusps (or marked points) and for hyperelliptic surfaces. More specifically, our main theorem states that any hyperbolic sphere with $n$ cusps has a pants decomposition with all of its geodesics of length bounded by a constant roughly square root of $n$. Other results include lower and upper bounds for Bers' constants for hyperelliptic surfaces and spheres with boundary geodesics.\end{abstract}

%%%%%%%%%%%%%%%%%%%%%%%%%%%%%%%

%%     AMS Mathematics Subject Classification

%%%%%%%%%%%%%%%%%%%%%%%%%%%%%%%

%

\subjclass[2000]{Primary Secondary }

\keywords{Riemann surfaces, simple closed geodesics, Bers' constants, Teichm\"uller and moduli spaces}

\maketitle

\section{Introduction}

Consider a hyperbolic surface of genus $g$ with $n$ cusps. It admits many pants decompositions: collections of simple closed geodesics whose complementary region consists of a collection of surfaces which are topologically thrice punctured spheres. Even up to homeomorphism, there are roughly $g!$ different pants decompositions of a genus $g$ surface. Among these, Lipman Bers \cite{be74,be85} showed that there is always one with all of its geodesics of length bounded by a constant which only depends on the topology of the surface. These constants are called Bers' constants.\\

One way of thinking of this is as a generalization of the fact that the length of the shortest closed geodesic (or systole) of a surface can be bounded by a constant which only depends on genus. There is a certain parallel between the two problems, although there are certain differences we shall outline. Both give a collection of constants and in both cases their behavior has been studied quite closely. For systoles, a great deal of attention has been given to the study of surfaces which are either the global maxima (for a fixed topology) or local extrema of the systole function \cite{sc931, sc98} and in fact the systole function is a topological morse function \cite{ak03}. However, for closed surfaces of genus $g$ the global maximum is only known for genus $2$ \cite{je84} and similarly, the only known Bers' constant for surfaces of closed genus is also in genus $2$ \cite{gen08}.\\

In both cases, quite a bit of effort has been put into studying the behavior of the constants as the topology grows. For closed hyperbolic surfaces, it is straightforward to find an upper-bound on systole length which grows logarithmically in genus. A lower bound that also grows logarithmically in genus is far less obvious \cite{busa94}. So, the rough asymptotic behavior is known for systoles and the question of what (or if there is) asymptotic behavior is wide open. For Bers' constants, less is known. Bers' original argument was an existence result and although there were arguments that yielded computable constants \cite{abbook}, the first real attempts at finding good upper-bounds are due to Peter Buser \cite{buhab}. These results were later improved by Buser and Buser-Seppal\"a \cite{buse92,bubook} and ultimately led to upper-bounds that grow linearly (in genus for closed surfaces, and in Euler characteristic for surfaces with cusps). Interesting lower bounds are also due to Buser \cite{buhab,bubook} and the lower bounds grow like square root (of Euler characteristic). Buser conjectures the following.

\begin{conjecture}\label{con:squareroot}
Bers' constants for surfaces of genus $g$ with $n$ cusps behave roughly like $\sqrt{g+n}$.
\end{conjecture}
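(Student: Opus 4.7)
The plan is to prove both directions of the estimate, writing $B_{g,n}$ for the Bers constant of surfaces of genus $g$ with $n$ cusps. The lower bound $B_{g,n}\gtrsim \sqrt{g+n}$ already follows from Buser's extremal constructions combined with Gauss--Bonnet: on carefully chosen examples, a pants decomposition has to divide a surface of area $2\pi(2g-2+n)$ into $2g-2+n$ pairs of pants, and collar-lemma volume bookkeeping then forces some cuff to have length at least of order $\sqrt{g+n}$.

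For the upper bound I would use a divide-and-conquer induction on the complexity $\xi(S)=3g-3+n$. The induction step is a balanced separator theorem: on every hyperbolic surface of genus $g$ with $n$ cusps there exists a disjoint multicurve of total length $O(\sqrt{g+n})$ that cuts $S$ into components each of complexity at most $\tfrac{2}{3}\xi(S)$. Given such a separator, the conjectured bound follows by iterating the splitting $O(\log \xi(S))$ times: each new level adds curves of length $O(\sqrt{g+n})$, and the geometric decay of the complexity makes the recursion converge without the bound blowing up.

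I would prove the separator statement first in the punctured-sphere case $g=0$: inflate candidate short curves using the collar lemma, invoke the fact that the total area is $2\pi(n-2)$ together with the hyperbolic isoperimetric inequality, and then pick a balanced cusp-splitting by a median argument on a combinatorial tree of short separators. The hyperelliptic case follows immediately by descending through the hyperelliptic involution $\tau$ to the quotient sphere with $n+2g+2$ marked points, applying the punctured-sphere result, and lifting; geodesic lengths at most double under the cover, yielding the desired $O(\sqrt{g+n})$.

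The hard part, and the reason the conjecture is still open in full generality, is carrying the separator argument through in higher genus where non-separating topology is present. Short non-separating curves exist but only with logarithmic length bounds, and using them to reduce genus one handle at a time costs factors that can exceed $\sqrt{g+n}$ when $n$ is small. I expect the main obstacle will be producing a balanced \emph{separating} multicurve directly in arbitrary genus, probably through a Cheeger-type isoperimetric inequality tailored to the hyperbolic setting that simultaneously couples genus and cusp contributions to the total area, so that the same volume argument that works in genus zero survives in the presence of handles.
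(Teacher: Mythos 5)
You should note first that the statement you were asked to prove is stated in the paper as a \emph{conjecture}: the paper itself only establishes it for punctured spheres and for hyperelliptic surfaces, and your proposal likewise does not prove the general case (you say so yourself), so at best you are sketching the same special cases. For those cases your outline does follow the paper's strategy: a Besicovitch-type isoperimetric argument produces a separating geodesic of length $O(\sqrt{n})$ with at least $n/4$ cusps (or cone points) on each side, and one recurses, the geometric decay $n_2\le \tfrac34 n$ absorbing the additive cost. But two steps you treat as routine are exactly where the work lies. First, after cutting along the separator $\gamma$ and capping off, the inductively obtained pants decompositions of the pieces need not contain $\gamma$; the paper needs a genuinely nontrivial projection lemma (given $\mathcal{P}$ and $\gamma$, there is a pants decomposition containing $\gamma$ of length at most $\ell(\mathcal{P})+\ell(\gamma)$, proved by a careful tree algorithm on the arcs of $\mathcal{P}\setminus\gamma$, where a sloppy choice only gives $\ell(\mathcal{P})+2\ell(\gamma)$) to make the recursion close with a bounded constant. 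Second, your lower bound is not correct as described: ``collar-lemma volume bookkeeping'' on a large-area surface does not force any single cuff to be long, since the area could be cut by many short curves. Buser's hairy torus needs the topological fact that some curve must cut genus, and on a sphere there is no genus to cut; the paper has to substitute a topological lemma (any pants decomposition contains a curve equal to, crossing, or separating two designated pants curves of the hairy sphere) before length estimates apply.

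Two further points. The hyperelliptic reduction is not an immediate corollary of the punctured-sphere theorem: the quotient is a sphere with $2g+2$ cone points of angle $\pi$, so one needs the cone-point analogue of the sphere result (including the convention that a ``geodesic'' may be a doubled arc between two cone points, and a separate lemma to start that induction), and the lift of a pants decomposition is not a pants decomposition --- it leaves $4$-holed spheres, each of which must be completed by an extra curve of controlled length. Most seriously, the route you propose for the general-genus case --- a Cheeger-type inequality producing a balanced \emph{separating} multicurve of length $O(\sqrt{g+n})$ --- is provably obstructed: as the paper remarks, Brooks' surfaces have first eigenvalue uniformly bounded below, so by Cheeger's inequality any multicurve splitting such a surface into two parts of comparable area has length at least linear in $g$. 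This is precisely why the divide-and-conquer method stops at punctured spheres and hyperelliptic surfaces, and why the conjecture remains open in general.
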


Interestingly there is a striking difference between the two problems. For systoles, a rough upper bound is immediate and the lower bound is given by families of surfaces coming from arithmetic constructions \cite{busa94,kascvi07}. For Bers' constants, if the conjecture is correct, the lower bound, although clever, does not require any outside technology from number theory. And the upper bound seems to far from immediate.\\

Bers' constants have been used in different ways for studying the geometry of surfaces and Teichm\"uller surfaces. Some of the original motivations included bounds on the number of isospectral non-isometric surfaces, but also for finding rough fundamental domains for the mapping class group \cite{bubook}. More recently, Brock \cite{br03} showed that the application that takes a surface, and sends it the set of curves that compose one of its short pants decompositions (that we know exists by Bers' theorem) provides a quasi-isometry between Teichm\"uller space with the Weil-Peterson metric and the pants graph.\\

The main goal of this article is to prove Buser's conjecture in the case of punctured spheres (theorem \ref{th:berspuncturedsphere}) and hyperelliptic surfaces (theorem \ref{th:bershypsurf}). Although we provide lower bounds which improve the known bounds, the upper bounds are the real novelty and are closely related to recent works of the two authors with St\'ephane Sabourau, see \cite{basadias} and \cite{bps}. Indeed,  as observed in \cite[Proposition 6.3]{bps}, we can recover an optimal square root upper-bound on Bers' constants for puncture growth and an alternative proof of Buser's linear bounds for genus growth using the main result of~\cite{basadias}. But the approach used in those articles isn't well adapted to the context of Buser's conjecture, and the constants thus obtained are huge. In this paper, we prove Buser's conjecture in the case of punctured spheres by showing that it fundamentally relies on a classical result: the so-called Besicovitch lemma (see section \ref{sec:mainlemmas}). Then we prove Buser's conjecture in the case of hyperelliptic surfaces. Although the ideas work very nicely to prove upper bounds for punctured or hyperelliptic surfaces, they do not appear to extend easily to solving the genus case. \\

The organization of the article is as follows. Section \ref{sec:prelim} is dedicated to preliminaries which also includes the proof of some general properties that Bers' constants satisfy. In section \ref{sec:mainlemmas} we show the main lemmas used in our proof and as they apply to different cases, we put them in a separate section. In section \ref{sec:cusps} we show the upper bounds for spheres with cusps. In section \ref{sec:cone} we show the corresponding result for spheres with cone singularities of angle $\pi$, which in turn is the main ingredient for the upper bound for hyperelliptic surfaces (section \ref{sec:hyper}). In section \ref{sec:boundary}, we apply our methods to obtain bounds on Bers' constants for surfaces with geodesic boundary. Lower bounds for all of the above Bers' constants are provided in section \ref{sec:ex}.\\

\noindent {\bf Acknowledgements.} The authors are grateful to St\'ephane Sabourau for many interesting conversations without which this project would not have been possible. The authors are also grateful to Greg McShane for suggesting to concentrate on the punctured sphere case. Finally, the authors thank Peter Buser for his encouragement.

\section{Notations and Preliminaries}\label{sec:prelim}

Let $g,n$ be positive integers such that $2-2g - n$ is negative. The Teichm\"uller space of hyperbolic surfaces of genus $g$ with $n$ cusps will be denoted $\T_{g,n}$. The set of surfaces of genus $g$ with $n$ cusps up to isometry will be called {\it moduli space} and denoted $\MM_{g,n}$. So in our setting $\MM_{g,n}$ can be obtained as the quotient of $\T_{g,n}$ by the (extended) mapping class group.\\

A general tool that we will be using regularly is provided by the following lemma, which we shall call the length expansion lemma \cite{pa051,thspine}.

\begin{lemma} Let $S$ be a surface with $n > 0$ boundary curves $\gamma_1,\hdots,\gamma_n$. For
$(\varepsilon_1,\hdots,\varepsilon_n) \in (\R^+)^n$ with at least
one $\varepsilon_i\neq 0$, there exists a surface $\tilde{S}$ with
boundary geodesics of length
$\ell(\gamma_1)+\varepsilon_1,\hdots,\ell(\gamma_n)+\varepsilon_n$ such that
all corresponding simple closed geodesics in $\tilde{S}$ are of
length strictly greater than those of $S$.
\end{lemma}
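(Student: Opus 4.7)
The plan is to construct $\tilde{S}$ directly via Fenchel--Nielsen coordinates relative to a pants decomposition of $S$, and to verify the strict length expansion through a monotonicity principle for arcs in pairs of pants.

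First, fix a pants decomposition $\mathcal{P}$ of $S$ containing $\gamma_1,\ldots,\gamma_n$ as some of its pants curves, with interior pants curves $\alpha_1,\ldots,\alpha_k$. This yields Fenchel--Nielsen coordinates: boundary lengths $L_i=\ell(\gamma_i)$, interior lengths $\ell_j=\ell(\alpha_j)$, and twist parameters $\tau_j$. Define $\tilde{S}$ as the hyperbolic surface with the same $\tau_j$, interior lengths $\ell_j+\delta_j$ for small $\delta_j\ge 0$ to be chosen, and boundary lengths $L_i+\varepsilon_i$.

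The geometric core is a monotonicity claim inside a single pair of pants: in a hyperbolic pair of pants $P$ with boundary components of lengths $a,b,c$, any simple geodesic arc whose endpoints avoid the boundary of length $a$ has length strictly increasing in $a$, with $b,c$ held fixed. For the common perpendicular between the $b$- and $c$-boundaries, this follows immediately from the identity
\[
\cosh\ell \;=\; \frac{\cosh(a/2)+\cosh(b/2)\cosh(c/2)}{\sinh(b/2)\sinh(c/2)}
\]
arising from the right-angled hexagonal structure of $P$; the remaining isotopy classes of simple arcs in $P$ (including self-perpendicular arcs from a boundary back to itself) are handled by analogous trigonometric identities combined with the reflection symmetry of $P$.

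For any non-peripheral simple closed geodesic $\gamma$ on $S$, decompose its geodesic representative into arcs in the pants of $\mathcal{P}$. Since $\gamma$ is disjoint from $\partial S$, every such arc has endpoints on interior curves $\alpha_j$. By the monotonicity claim, each arc that lies in a pair of pants containing some $\gamma_i$ as boundary strictly lengthens when $L_i$ is raised to $L_i+\varepsilon_i$; the twist invariance ensures that the lengthened arcs still fit together across the $\alpha_j$ to form a closed geodesic in $\tilde{S}$ of the same isotopy class as $\gamma$. Summing the arc lengths yields $\ell_{\tilde{S}}(\gamma)>\ell_S(\gamma)$.

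The main obstacle is to handle non-peripheral simple closed geodesics $\gamma$ entirely contained in a subsurface of $S$ disjoint from every pair of pants adjacent to some $\gamma_i$ with $\varepsilon_i>0$, which are insensitive to boundary changes alone. This forces one to perturb the interior lengths as well: choose $\delta_j>0$ for each $\alpha_j$ lying deep in such a subsurface and apply the same monotonicity argument with $\alpha_j$ playing the role of a virtual boundary. Using the real-analyticity of length functions on Teichm\"uller space together with the discreteness of the length spectrum, the $\delta_j$'s can be chosen sufficiently small that the cumulative deformation produces a net strict length increase for every non-peripheral simple closed geodesic simultaneously, while leaving the prescribed boundary lengths $L_i+\varepsilon_i$ untouched.
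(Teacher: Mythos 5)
Note first that the paper does not prove this lemma at all: it is quoted from the references \cite{pa051} and \cite{thspine}, where the argument proceeds by inserting hyperbolic strips along embedded geodesic arcs meeting the boundary (the complement of the arc embeds isometrically in the deformed surface, and every closed geodesic crossing the arc must in addition traverse the strip, whence the strict increase). Your Fenchel--Nielsen route is genuinely different, but as written it has two gaps. The first is in the arc-decomposition step. The arcs into which a simple closed geodesic $\gamma$ is cut by the pants curves are not orthogeodesics: they meet the curves $\alpha_j$ at arbitrary points and angles, whereas the right-angled hexagon identity you quote controls only the common perpendicular, so the ``monotonicity claim'' is not established for the arcs you actually need. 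More seriously, even granting monotonicity arc by arc, reassembling the lengthened arcs produces a piecewise-geodesic \emph{representative} of $\gamma$ in $\tilde{S}$, which bounds $\ell_{\tilde{S}}(\gamma)$ from \emph{above}, not from below. To conclude $\ell_{\tilde{S}}(\gamma)>\ell_S(\gamma)$ you must bound from below every representative of $\gamma$ in $\tilde{S}$; the geodesic representative in $\tilde{S}$ decomposes into arcs with different endpoints and incidence angles than the old one, so there is no term-by-term comparison.

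The second gap is the interior perturbation. Geodesic length is not monotone in an interior Fenchel--Nielsen length parameter: already on a one-holed torus with coordinates $(\ell(\alpha),\tau=0)$, the trace relation gives $4\cosh^2(\ell(\beta)/2)=x^2/(x-2)$ with $x=2\cosh(\ell(\alpha)/2)$ for the dual curve $\beta$, and this is \emph{decreasing} in $x$ for $2<x<4$; increasing an interior length can therefore shorten crossing curves. So choosing $\delta_j>0$ and ``applying the same monotonicity argument with $\alpha_j$ as a virtual boundary'' does not work, and is in any case circular, since it invokes the lemma being proved for the subsurface. Finally, the assertion that sufficiently small $\delta_j$ preserve the strict increase ``for every simple closed geodesic simultaneously'' requires a uniform positive lower bound on infinitely many length differences; real-analyticity and discreteness of the length spectrum give control curve by curve, not uniformly. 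These are exactly the difficulties the strip-deformation proof is designed to avoid, and I would recommend adopting that mechanism rather than trying to patch the Fenchel--Nielsen argument.
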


For a given surface $S\in \MM_{g,n}$ and a pants decomposition $\PP$ of $S$, we define the length of $\PP$ as 
$$
\ell(\PP)=\max_{\gamma \in \PP} \ell(\gamma).
$$
The {\it Bers' constant} of $S$, denoted $\BB(S)$, is then the length of a shortest pants decomposition of the given surface $S$. The quantity $\BB_{g,n}$ is defined as
$$
\BB_{g,n}=\sup_{S\in \MM_{g,n}} \ell(\BB(S)).
$$
This quantity is well defined by Bers' original theorem \cite{be74} as this quantity can be bounded by a function that only depends on $g$ and $n$. Explicit bounds were first calculated in \cite{abbook}. Buser's investigations led to a number of bounds \cite{buhab,buse92}, where best lower and upper bound for closed surfaces of genus $g$ can be found in \cite[Theorems 5.1.3, 5.1.4]{bubook}.

\begin{theorem}\label{thm:buser}
Bers' constants satisfy $\sqrt{6g}-2 \leq \BB_{g,0} \leq 6 \sqrt{3\pi} (g-1)$.
\end{theorem}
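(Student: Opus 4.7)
The plan is to establish the two inequalities of Theorem \ref{thm:buser} by quite different techniques.

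For the upper bound $\BB_{g,0} \leq 6\sqrt{3\pi}(g-1)$, the strategy is to build a pants decomposition one curve at a time. At each stage I have a (possibly disconnected) compact hyperbolic subsurface $S'$ with geodesic boundary, and I extract from it a non-peripheral simple closed geodesic of controlled length. The natural way to produce such a curve is to fix a point $p \in S'$ and grow an embedded disk $B(p,r)$: since $\area(B(p,r)) = 2\pi(\cosh r - 1)$ in hyperbolic geometry while Gauss--Bonnet forces $\area(S') = -2\pi\chi(S')$, the disk must self-intersect or meet $\partial S'$ for $r$ of order $\log|\chi(S')|$. From the first collision one reads off an essential loop, which after replacing by its geodesic representative and performing elementary surgery becomes a simple closed geodesic in $S'$. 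Iterating the extraction $3g-3$ times produces the desired pants decomposition.

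The delicate aspect is the length bookkeeping. A per-step estimate of the form $\ell \leq O(\log|\chi|)$ would sum to $O(g\log g)$, which is too weak. To obtain the linear bound one couples the disk-growth argument with the collar lemma: each curve extracted carries a definite embedded collar, and the total collar area contributes to exhausting the fixed surface area $4\pi(g-1)$. Balancing these contributions against the radius in the volume-of-ball estimate, and optimizing over the free threshold separating ``short'' from ``long'' curves, recovers the explicit constant $6\sqrt{3\pi}$.

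For the lower bound $\sqrt{6g}-2 \leq \BB_{g,0}$, the plan is to exhibit for each $g$ a closed hyperbolic surface in which every pants decomposition contains a long curve. A natural candidate is the surface obtained by gluing $2g-2$ isometric pairs of pants of common cuff length $L$ along a chosen trivalent graph. One computes that any simple closed geodesic not homotopic to one of the original cuffs must traverse several pants, and the number of pants traversed is constrained from below by the combinatorial structure (e.g.\ the girth) of the gluing graph. Since the intra-pant distance between two distinct boundary components is bounded from below as a function of $L$, one obtains an inequality of the form $\BB(S) \geq L \cdot f(g)$ on the Bers' constant of the example, and optimizing $L$ subject to the total-area constraint $(2g-2)\cdot 2\pi = 4\pi(g-1)$ gives the $\sqrt{6g}-2$ asymptotic.

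The principal obstacle I anticipate is precisely the linear-in-$g$ bookkeeping for the upper bound: naive iteration loses a logarithmic factor, and matching the explicit constant $6\sqrt{3\pi}$ requires a careful accounting of area consumed by each extracted curve through its collar. The lower bound, while also requiring care, is essentially an explicit computation within a well-chosen family.
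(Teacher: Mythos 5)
This theorem is not proved in the paper at all: it is quoted verbatim from Buser's book (Theorems 5.1.3 and 5.1.4 of \cite{bubook}), so there is no internal proof to compare against. Judged on its own terms, your upper-bound sketch is at least in the right spirit --- Buser's argument is indeed an iterated disk/collar-growth and area-exhaustion argument (the paper adapts exactly this scheme in Lemma \ref{lem:bersde5}) --- but your bookkeeping worry is misdiagnosed: the iterative estimate one actually gets is $\ell(\gamma_k)\leq 4k\ln\frac{8\pi(g-1)}{k}$, whose maximum over $k\leq 3g-3$ is already linear in $g$ because the logarithm's argument is $(g-1)/k$, not $g$; the passage to the specific constant $6\sqrt{3\pi}$ is a separate refinement that your sketch does not actually supply.

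The genuine gap is in the lower bound. A surface obtained by gluing $2g-2$ isometric pairs of pants with cuff length $L$ along a trivalent graph \emph{contains} the cuffs as a pants decomposition, so $\BB(S)\leq L$ by construction; an inequality of the form $\BB(S)\geq L\cdot f(g)$ is therefore impossible unless $f(g)\leq 1$, and ``optimizing $L$ subject to the area constraint'' is vacuous since every hyperbolic pair of pants has area $2\pi$ regardless of $L$. Worse, as $L$ grows the seams between cuffs shrink, producing short transversal curves, so large $L$ does not force a large Bers' constant. Finally, even granting your mechanism, the girth of a trivalent graph on $2g-2$ vertices is $O(\log g)$, so counting pants traversed can never yield a $\sqrt{g}$ lower bound --- this is the mechanism behind logarithmic systole growth, not square-root Bers' growth. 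The correct example is the hairy torus (the analogue of the hairy sphere in Section \ref{sec:ex}): a genus-one core tiled by roughly $g$ cylinders of definite width, each carrying a very short ``hair'' geodesic. The collar lemma forces every short pants decomposition to contain the hairs, and what remains must contain a genus-cutting curve of the torus, which must traverse on the order of $\sqrt{g}$ cylinders of uniformly bounded-below width; this is where $\sqrt{6g}-2$ comes from. Your proposal contains none of these three ingredients (forced short curves, a topologically forced genus-cutting curve, and a wide core), so the lower bound does not go through.
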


We now begin by showing a certain number of properties that these constants have. In the following result, we show that in the definition of $\BB_{g,n}$, the ``$\sup$" can be made a ``$\max$". This may be well known to specialists, but in fact we can get some control on how thin a maximal surface can be. 

\begin{property}\label{property:maxbers1}
There exists a surface $S_{\max}\in \MM_{g,n}$ such that $\BB(S_{\max}) = \BB_{g,n}$. Furthermore $\sys(S_{\max})\geq s_{g,n}$ where $s_{g,n}>0$ is a constant that only depends on $g$ and $n$.
\end{property}

\begin{proof}
Given a surface $S \in \MM_{g,n}$, by the collar theorem (\cite{kee74}, and see \cite[Theorem 4.4.6]{bubook} for the version we use) any simple closed geodesic that crosses a geodesic of length $\ell$ has length at least $2\,\arcsinh{1\over \sinh\frac{\ell}{2}}$ . 
We consider a surface $S \in \MM_{g,n}$ with a systole $\gamma$ of length
$$
\ell(\gamma) < s_{g,n}:=\min\{2 \arcsinh \frac{1}{\sinh  \BB_{g,n}},2 \arcsinh 1\}
$$
so that any geodesic that crosses it has length at least $2 \BB_{g,n}$. Note that because $\ell(\gamma)< 2 \arcsinh 1$, all systoles of $S$, if there are several, are disjoint. Thus a shortest pants decomposition of $S$ necessarily contains all the systoles of $S$. By using the length expansion lemma explained above, one can increase the length of all the systoles at least up until $s_{g,n}$ to obtain a new surface $S'$ such that the lengths of all simple closed geodesics disjoint from the systoles increase (strictly). In particular $\BB(S') > \BB(S)$ and $\sys(S')=s_{g,n}$.  Thus we have moved to the thick part of moduli space (in this instance meaning the subspace of $\MM_{g,n}$ where $\sys(M) \geq s_{g,n}$ for all all $M\in \MM_{g,n}$) while increasing the Bers' constant. The thick part of moduli space being compact \cite{mu71}, it suffices to find the $\sup$ for $\BB_{g,n}$ on a compact set. Now $\BB$ is a continuous function over moduli space and this proves the existence of a $S_{\max}$.
\end{proof}

Note that in the following section we shall show that a stronger result holds for hyperbolic punctured spheres  (corollary \ref{cor:bigsys}): the constants $s_{0,n}$ defined above can be taken to be $2\, \arcsinh 1$.

\begin{definition} For a given surface $S$ and a given a pants decomposition $\PP$ such that $\ell(\PP) = \BB(S)$, a geodesic $\gamma \in \PP$ is said to be {\it essentially long} if there does not exist a multicurve $\mu \subset \PP \setminus \{\gamma\}$ and a multicurve $\mu'$ with $\mu\cup \mu'$ a pants decomposition of $S$ and $\ell(\mu') < \ell(\PP)$. In particular $\ell(\gamma)=\ell(P)$. 
\end{definition}

Note that, because $\ell(\PP) = \BB(S)$, the multicurve $\mu$ in the above definition is non-empty and $\ell(\mu) = \ell(\PP)$.

\begin{property}\label{property:maxbers2}
Let $S_{\max}\in \MM_{g,n}$ be such that $\BB(S_{\max})= \BB_{g,n}$. Then every simple closed geodesic on $S_{\max}$ intersects at least two distinct essentially long geodesics.
\end{property}

\begin{proof}
One begins, as in the previous property, by using the length expansion lemma to show that any simple closed geodesic $\gamma$ must intersect at least one essentially long curve. Geodesic length functions are convex along a twist \cite{ke83}, so one can twist  along $\gamma$ to make the essentially long curve even longer. As we have supposed that our surface is maximal, this means that another pants decomposition (of the twisted surface) must be at most as long as the previous one. The only lengths that change under a twist along $\gamma$ are those of geodesics that intersect $\gamma$. Thus $\gamma$ must have intersected a second essentially long geodesic. \end{proof}

Bers' constants satisfy the following inequalities.

\begin{property}\label{property:maxbers3}
The following inequalities hold:\\
\begin{enumerate}[a)]

 \item $ \BB_{g,n+1}>  \BB_{g,n}$, \\

\item $ \BB_{g,n}>  \BB_{g-1,n+2}$,\\

\item $ \BB_{g+1,n}>  \BB_{g,n}$.\\

\end{enumerate}
\end{property}

\begin{proof}
To show the first inequality, consider $S_{\max}\in \MM_{g,n}$ as in property \ref{property:maxbers1}. Now cut the surface open along some simple closed geodesic $\gamma$ (to obtain a new surface with two boundary geodesics $\gamma_1$ and $\gamma_2$). The idea is now to insert a pair of pants with two geodesics of the same length as $\gamma$ and a cusp. There is a two real parameter space of possible ways of doing this and one has to be careful about how the pasting is done. To do this, one chooses a point on $\gamma$, say $p$, and its lifts $p_1$ and $p_2$ on respectively $\gamma_1$ and $\gamma_2$. On the pair of pants set to be inserted, one considers the common geodesic perpendicular between the two boundary curves. Now one pastes the pair of pants so that one end of the common perpendicular coincides with $p_1$ and the other end coincides with $p_2$.\\

The construction we've described gives a surface $\tilde{S}$ of signature $(g,n+1)$ with an interesting property: any of its pants decompositions has length at least $\BB(S_{\max})$. To see this, consider the natural surjective map between simple closed curves on $\tilde{S}$ and simple closed curves on $S_{\max}$ one obtains by removing the extra cusp. (This map is generally called the {\it forgetful map}.) Now suppose that one has a shorter pants decomposition on $\tilde{S}$. The forgetful map applied to the curves in the pants decomposition gives a set of curves on $S_{\max}$ which have the same length if they did not intersect the inserted pair of pants, and which are strictly shorter if they did. This shows that $\BB_{g,n+1}\geq \BB_{g,n}$. But if $\BB(\tilde{S})=\BB(S_{\max})$, then neither $\gamma_1$ nor $\gamma_2$ intersect an essentially maximal geodesic: the length of any geodesic that crosses both $\gamma_1$ and $\gamma_2$ is strictly greater that the corresponding image by the forgetful map. So, by property \ref{property:maxbers2}, $\tilde{S}$ cannot be maximal for signature $(g,n+1)$. There must then be a surface with a larger Bers' constant of the same signature and this shows that $\BB_{g,n+1}> \BB_{g,n}$.\\

The second inequality is easier to show. The idea is similar to those that go into showing property \ref{property:maxbers1}  (a maximal surface in $\MM_{g,n}$ has a systole bounded below by the constant $s_{g,n}$). Now consider a maximal surface $S'$ for signature $(g-1,n+2)$ and transform it so that two cusps become genuine geodesics of length strictly less than $s_{g,n}$. By the Schwartz lemma principal, one can do this while increasing the lengths of all interior simple closed geodesics. In particular the length of all pants decompositions are strictly increased. Now one pastes the two boundary geodesics to obtain a surface $S''$ of signature $(g,n)$ where any short pants decomposition must contain the pasted geodesic because its length is less than $s_{g,n}$. And thus any pants decomposition of $S''$ is longer than a shortest pants decomposition of $S'$.\\

The third inequality is just an obvious application of the other two.
\end{proof}

\remark In particular one obtains that for closed surfaces $\BB_{g+1}>\BB_g$. The corresponding question for systoles on closed hyperbolic surfaces is open and seems to be a very difficult question. There is also a related question for (orientable) riemannian surfaces if whether the systolic ratio (supremum of the systole among Riemannian metrics of fixed area equal to $1$) is a decreasing function in genus and is also wide open. In particular, a positive solution to this question would show that all surfaces satisfy Loewner's inequality.\\

We shall use these properties to obtain upper bounds for punctured spheres using Buser's bounds (theorem \ref{thm:buser} above). Note that there are explicit bounds in \cite[Theorem 5.2.6]{bubook} for punctured spheres, but they are not as good.

\begin{corollary}\label{cor:basicpuncture}
Bers' constants for punctured spheres satisfy 
$$\BB_{0,n} \leq 3 \sqrt{3\pi} (n-1).$$
\end{corollary}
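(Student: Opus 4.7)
The plan is to \emph{trade cusps for genus} using property \ref{property:maxbers3}(b), reducing to a closed surface of roughly half the number of punctures, and then apply Buser's upper bound from theorem \ref{thm:buser}. The idea is that property (b) tells us two cusps are ``cheaper" than one handle in terms of Bers' constants, so iterating $\BB_{g,n} > \BB_{g-1,n+2}$ in reverse lets us replace all the cusps by genus.

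More precisely, for even $n=2k$ with $k\geq 2$, I would chain property \ref{property:maxbers3}(b) $k$ times to obtain
$$
\BB_{0,2k} < \BB_{1,2k-2} < \BB_{2,2k-4} < \cdots < \BB_{k,0},
$$
and then invoke theorem \ref{thm:buser} to conclude
$$
\BB_{0,n} < \BB_{n/2,\,0} \leq 6\sqrt{3\pi}\,(n/2-1) = 3\sqrt{3\pi}\,(n-2) < 3\sqrt{3\pi}\,(n-1).
$$

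For odd $n=2k+1$ the same chain of (b)'s ends at $\BB_{k,1}$, and part (a) goes the wrong way at that stage. The fix is to spend one extra puncture up front using (a) instead:
$$
\BB_{0,2k+1} < \BB_{0,2k+2} < \BB_{1,2k} < \cdots < \BB_{k+1,0} \leq 6\sqrt{3\pi}\,k = 3\sqrt{3\pi}\,(n-1).
$$
The small cases ($n=3,4$) are handled trivially since $\BB_{0,3}=0$ and the $n=4$ bound follows directly from the even chain above.

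I do not expect a genuine obstacle here: the three monotonicity inequalities in property \ref{property:maxbers3} were set up precisely so that comparisons like this become formal. The only thing to watch is parity of $n$, which forces the single application of (a) in the odd case, and the verification that the numerical factors coming out of theorem \ref{thm:buser} (namely $6\sqrt{3\pi}(k-1)$ or $6\sqrt{3\pi}k$) indeed fit under $3\sqrt{3\pi}(n-1)$.
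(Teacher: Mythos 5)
Your proposal is correct and is essentially the paper's own argument: iterate property \ref{property:maxbers3}(b) to get $\BB_{0,2g}<\BB_{g,0}\leq 6\sqrt{3\pi}(g-1)$, and handle odd $n$ via property (a) (the paper's ``parity adjustment''). Your explicit parity bookkeeping and the check that the odd case $6\sqrt{3\pi}k=3\sqrt{3\pi}(n-1)$ is what forces the stated constant are both accurate.
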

\begin{proof}
The bound from theorem \ref{thm:buser} with the second inequality from property \ref{property:maxbers3} shows $\BB_{0,2g}< 6  \sqrt{3\pi} (g-1)$. The adjustment is just to avoid a parity issue for $n$.
\end{proof}

\section{Main lemmas}\label{sec:mainlemmas}

%\markleft{BERS IS BACK}

In this section we prove lemmas which will be crucial in our proof of the main theorems. \\

We will use the following classical lemma (called Besicovitch's lemma
\cite{bes52}, see also \cite{al64} and \cite{besp42}):

\begin{lemma}\label{lem:alm}
Let $D$ be a riemannian disk, and let $\gamma=\partial D$. Suppose $\gamma$ is the concatenation of four subpaths, i.e., $\gamma= c_1 \cup c_2 \cup c_3 \cup c_4$. Then

$$
\area(D) \geq d_D(c_1,c_3) \,d_D(c_2,c_4).
$$
\end{lemma}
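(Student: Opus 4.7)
The plan is to reduce the inequality to a one-variable coarea estimate on $D$ using the distance function to $c_1$, and to exploit the cyclic order of the four arcs on $\partial D$ to control the length of each level set.

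Set $a=d_D(c_1,c_3)$ and $b=d_D(c_2,c_4)$, and consider the function $f\colon D\to\R$ defined by $f(x)=d_D(x,c_1)$. This $f$ is $1$-Lipschitz, vanishes identically on $c_1$, and satisfies $f\geq a$ on $c_3$. My first step will be to show that for almost every $t\in(0,a)$, the level set $f^{-1}(t)$ has $\mathcal{H}^{1}$-measure at least $b$. Since $f$ is continuous with $f|_{c_1}=0$ and $f|_{c_3}\geq a$, the closed set $\{f\leq t\}$ contains $c_1$ while its complement contains $c_3$, so its topological frontier in $D$, which is contained in $f^{-1}(t)$, separates $c_1$ from $c_3$ inside the disk $D$.

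Next I use the planar topology of the disk: because $c_1,c_2,c_3,c_4$ appear in this cyclic order along $\partial D$, any closed set in $D$ that separates $c_1$ from $c_3$ must contain a connected continuum whose closure meets both $c_2$ and $c_4$ (otherwise one could connect $c_1$ to $c_3$ through a neighborhood of $c_2$ or of $c_4$). Such a continuum has diameter, and hence $\mathcal{H}^{1}$-measure, at least $d_D(c_2,c_4)=b$. Therefore $\mathcal{H}^{1}(f^{-1}(t))\geq b$ for every $t\in(0,a)$ for which the frontier is $\mathcal{H}^{1}$-rectifiable; by the structure of level sets of Lipschitz functions on a Riemannian surface (equivalently, by Sard's theorem applied to a smooth approximation of $f$) this holds for almost every $t\in(0,a)$.

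Finally I invoke the coarea inequality for Lipschitz functions: since $|\nabla f|\leq 1$ almost everywhere,
$$
\int_{0}^{a}\mathcal{H}^{1}(f^{-1}(t))\,dt \;\leq\; \int_{\{0\leq f\leq a\}}|\nabla f|\,d\area \;\leq\; \area(D).
$$
Combining with the pointwise lower bound $\mathcal{H}^{1}(f^{-1}(t))\geq b$ gives $ab\leq\area(D)$, which is the claim.

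The main obstacle is the topological separation step. Level sets of a distance function are not a priori one-manifolds, so to justify that they contain a curve joining $c_2$ and $c_4$ of length at least $b$ one has to argue either with general continua (using that a planar separator between two boundary arcs must connect the other two), or smooth $f$ slightly and pass to the limit. The coarea application is then routine, and the cyclic-order argument is purely planar and independent of the metric.
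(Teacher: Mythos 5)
Your argument is correct, but note that the paper does not actually prove Lemma~\ref{lem:alm}: it is quoted as a classical fact with references to Besicovitch, Almgren, and Bergman--Spencer, so there is no in-paper proof to match. Your coarea route is one of the two standard proofs, and you have the right skeleton: $f=d_D(\cdot,c_1)$ is $1$-Lipschitz, each level $f^{-1}(t)$ with $0<t<a$ separates $c_1$ from $c_3$ in $D$, a compact separator of two opposite boundary arcs of a disk must contain a continuum meeting the closures of the other two arcs (this is the Janiszewski/Hex-type duality in the square model, and, as you say, it is the only genuinely delicate point -- the continuum need not be rectifiable, but $\mathcal{H}^1(\text{continuum})\geq\mathrm{diam}$ holds regardless, so your hedge about rectifiability is unnecessary), and then Eilenberg's coarea inequality with $|\nabla f|\leq 1$ a.e.\ gives $ab\leq\area(D)$. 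The alternative classical route, closer to Besicovitch's original, avoids level sets altogether: one considers the map $F(x)=\bigl(\min(d_D(x,c_1),a),\,\min(d_D(x,c_2),b)\bigr)$ into the rectangle $[0,a]\times[0,b]$, checks that $F|_{\partial D}$ has degree one onto the boundary of the rectangle (so $F$ is onto), and uses that the Jacobian of $F$ is bounded by $|\nabla d(\cdot,c_1)|\,|\nabla d(\cdot,c_2)|\leq 1$, whence $\area(D)\geq ab$. The degree argument trades your plane-topology separation lemma for an equally classical but perhaps more robust surjectivity statement; your version has the advantage of producing the intermediate quantitative information $\mathcal{H}^1(f^{-1}(t))\geq b$ for all $t$, which is sometimes useful elsewhere. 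Either way, your proof is sound and self-contained modulo the named topological fact.
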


We shall use this lemma to prove the following result for hyperbolic spheres with cone point singularities. Note that a cone point of angle $0$ is in fact a cusp. This more general lemma will allow us to treat the case of hyperelliptic surfaces later, as well as the case of punctured spheres.

\begin{lemma}\label{lem:geocheeger}
Let $S$ be a hyperbolic sphere with $n\geq 5$ cone points of angle $\theta \in [0,\pi]$. 
Consider the set $\FF$ of simple closed non trivial geodesics $\delta$ such that each connected component of $S\setminus \delta$ contains at least $\frac{n}{4}$ cone points. Then the following inequality holds:
$$
\min_{\delta\in \FF} \ell(\delta) \leq 4 \sqrt{(2\pi-\theta) \, n_2-4\pi}
$$
where $n_1\leq n_2$ are the number of cone points lying in each connected component of $S\setminus \delta$.
\end{lemma}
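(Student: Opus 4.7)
The strategy is to apply Besicovitch's Lemma (Lemma \ref{lem:alm}) to the larger of the two hyperbolic disks obtained by cutting $S$ along a shortest curve in $\FF$, after subdividing its boundary into four equal arcs.

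First, let $\delta_0 \in \FF$ realize $L := \min_{\delta \in \FF} \ell(\delta)$; existence of a minimizer is a standard compactness argument, since $\FF$ consists of finitely many mapping class group orbits of isotopy classes, each carrying a unique simple closed geodesic representative. Cut $S$ along $\delta_0$ to obtain two closed topological disks $D_1$ and $D_2$ containing $n_1 \leq n_2$ cone points respectively; by Gauss-Bonnet applied to $D_2$ (whose boundary is geodesic), $\area(D_2) = (2\pi-\theta)n_2 - 2\pi$. Subdivide the boundary $\partial D_2 = \delta_0$ cyclically into four arcs $c_1, c_2, c_3, c_4$ of equal length $L/4$.

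The decisive estimate is
\[
d_{D_2}(c_1, c_3) \geq L/4 \quad \text{and} \quad d_{D_2}(c_2, c_4) \geq L/4.
\]
To establish the first of these (the second is symmetric), let $\beta$ be a shortest path in $D_2$ from $c_1$ to $c_3$, of length $d$. Its endpoints partition $\delta_0$ into two arcs $\sigma, \sigma'$, each of length in $[L/4, 3L/4]$ and summing to $L$, while $\beta$ partitions $D_2$ into two sub-disks carrying $k$ and $n_2 - k$ cone points. Because $n_2 \geq n/2$, the maximum of these two integers is at least $n/4$, which is at least $2$ since $n \geq 5$; after relabeling assume $k \geq n/4$. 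Concatenating $\beta$ with the arc of $\delta_0$ bounding the $k$-cone-point sub-disk yields a simple closed curve that separates $S$ into two pieces containing $k \geq n/4$ and $n_1 + (n_2 - k) \geq n_1 \geq n/4$ cone points respectively. Its geodesic representative therefore belongs to $\FF$, and minimality of $\delta_0$ forces $d + |\sigma| \geq L$; combined with $|\sigma| \leq 3L/4$ this gives $d \geq L/4$.

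Applying Besicovitch's Lemma to $D_2$ with this four-arc decomposition produces $\area(D_2) \geq d_{D_2}(c_1,c_3) \cdot d_{D_2}(c_2,c_4) \geq L^2/16$, and substituting the Gauss-Bonnet value of $\area(D_2)$ furnishes the upper bound on $L$ in terms of $n_2$ (possibly after a small sharpening of the constant inside the radical). The main obstacle is the topological bookkeeping in the splicing step: one must confirm that the spliced closed curve is simple and essential, and that replacing it by its geodesic representative does not upset the balanced partition of the cone points into sets of size $\geq n/4$. The estimate $k \geq n/4 \geq 2$, enabled by the hypothesis $n \geq 5$, precisely rules out the degenerate cases in which a sub-disk would bound the trivial class (no cone points) or be peripheral to a single cone point or cusp.
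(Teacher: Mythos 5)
Your proof is correct and follows essentially the same route as the paper's: cut along a length-minimizing curve of $\FF$, subdivide its boundary into four arcs of length $L/4$, establish $d(c_1,c_3),d(c_2,c_4)\geq L/4$ by an exchange argument (splice the connecting path with a boundary arc to produce another curve of $\FF$ and invoke minimality), then apply Besicovitch's lemma and Gauss--Bonnet; your version of the exchange step is in fact slightly more careful than the paper's, since you verify explicitly which spliced curve has both complementary cone-point counts at least $n/4$. The one point of divergence is the area: your value $\area(S_2)=(2\pi-\theta)n_2-2\pi$ is the correct Gauss--Bonnet count for a disk with geodesic boundary and $n_2$ cone points (the paper writes $-4\pi$, which is the constant for the closed sphere), so what the argument honestly yields is $L\leq 4\sqrt{(2\pi-\theta)n_2-2\pi}$ rather than the stated bound; the ``small sharpening'' you defer is not available from this argument, but the weaker constant is all that is used in the subsequent induction.
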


\begin{remark} The bound on the length of $\delta$ roughly behaves like square root in the number of punctures. Due to the relationship between Cheeger constants, and the first eigenvalue of the laplacian, one could not hope for a similar lemma for closed surfaces with anything better than a linear bound in genus. Indeed, Brooks constructed surfaces \cite{br99} with first eigenvalue uniformly bounded from below which implies, via Cheeger's inequality for surfaces \cite{ch70}, that on such surfaces a curve $\delta$ is in the above lemma would necessarily have length at least linear in genus.
\end{remark}

\begin{proof}
Consider $\gamma\in \FF$ such that  $\ell(\gamma)=\min_{\delta\in \FF} \ell(\delta)$ ($\gamma$ exists and this is really a ``$\min$" as on a hyperbolic surface there are only a finite number of closed geodesics of length less than any given constant $K$).

Now consider the two connected components $S_1,S_2$ of $S\setminus \gamma$ with $n_1$ and $n_2$ cone points respectively ($n_1 \leq n_2$). We continue to denote $\gamma$ the resulting boundary geodesic on both $S_1$ and $S_2$ from cutting along $\gamma$. On $S_2$, consider any two distinct points $p,q$ on $\gamma$ and any geodesic path $c$ between them which is not a subpath of $\gamma$. Denote $\gamma'$ and $\gamma''$ the two subpaths of $\gamma$ separated by $p$ and $q$. Note that either the concatenation of $c$ with $\gamma'$ or the concatenation of $c$ with $\gamma''$, is a simple closed curve whose simple closed geodesic representative lies in $\FF$. Thus the following inequality holds:

$$
\ell(c) \geq \min\{\ell(\gamma'),\ell(\gamma'')\}.
$$

Now separate $\gamma$ into four arcs of length $\frac{\ell(\gamma)}{4}$, say $\gamma_k$, $k\in \Z_4$ (in cyclic ordering which follows a given orientation of $\gamma$). Because of the above inequality we have 

$$
d_{S_2}(\gamma_k,\gamma_{k+2}) \geq \frac{\ell(\gamma)}{4}
$$

\noindent for $k=1,2$. Now by lemma \ref{lem:alm}, we have that

$$
\area(S_2) \geq  \left(\frac{\ell(\gamma)}{4}\right)^2.
$$

As $S_2$ has one boundary component and $n_2$ cone points of angle $\theta$, we have $\area(S_2) = (2\pi-\theta) \, n_2-4\pi$ and one obtains the desired inequality. 
\end{proof}

The two cases we will be interested with in the sequel are the case $\theta=0$ (sphere with cusps) and the case $\theta=\pi$ (quotient sphere of a hyperelliptic surface by its hyperelliptic involution).\\

The second lemma we shall prove is essentially a topological lemma which we make geometric. The basic question here is: given a pants decomposition $\PP$, and a simple closed geodesic $\gamma$, how can one find a new pants decomposition which contains $\gamma$ and whose length is bounded by a function of the lengths of $\gamma$ and $\PP$?\\

The real problem is in actually constructing a new pants decomposition. Given $\gamma$ and $\gamma'$ two intersecting curves, there is a natural way of building a new curve $\gamma''$ which doesn't intersect $\gamma$ by concatenation of an arc in $\gamma$ and an arc in $\gamma'$ disjoint from $\gamma$. Roughly speaking, this is one of the ideas behind subsurface projections between curve complexes defined in \cite{masmin00}. This naturally gives a set of curves which are not necessarily pairwise disjoint but whose length are bounded by the sum of the lengths of $\gamma$ and $\gamma'$. When applying this technique to entire pants decompositions however, one has to ensure that within the set of projected curves there is a pants decomposition and in fact this works quite nicely on punctured spheres as we shall see.

\begin{lemma}\label{lem:proj}
Consider $S$ a hyperbolic sphere with boundary consisting of either cusps, cone points (of angle less than $\pi$) or boundary geodesics. Consider a pants decomposition $\PP$ of $S$ and a simple closed geodesic $\gamma$. Then there exists a pants decompostion $\PP'$ of $S$ containing $\gamma$ such that 
$$
\ell(\PP')\leq \ell(\PP)+\ell(\gamma).
$$
\end{lemma}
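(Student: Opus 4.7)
The plan is to induct on the geometric intersection number $|\gamma \cap \PP|$. In the base case $|\gamma \cap \PP| = 0$, since every connected component of $S \setminus \PP$ is a pair of pants and contains no essential, non-peripheral simple closed curve, the geodesic $\gamma$ must coincide with some curve of $\PP$; thus I would take $\PP' = \PP$.

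For the inductive step I would cut $S$ along $\gamma$, obtaining two planar subsurfaces $S^a, S^b$. Writing $\PP = \PP_0 \sqcup \PP_1$ with $\PP_0$ the curves disjoint from $\gamma$ and $\PP_1$ those crossing $\gamma$, the curves of $\PP_0$ restrict inside each piece to disjoint simple closed curves, while the curves of $\PP_1$ restrict to disjoint essential proper arcs with endpoints on $\gamma$. The aim is to build a pants decomposition of each piece whose curves are either curves of $\PP_0$ lying in that piece or \emph{single-surgery} curves: the geodesic representative of a closed curve $\beta \cup \gamma^\pm$, where $\beta$ is one of the restricted arcs and $\gamma^\pm$ is one of the two arcs of $\gamma$ between its endpoints. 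Such a curve has length at most $\ell(\beta) + \ell(\gamma^\pm) \leq \ell(\PP) + \ell(\gamma)$, so assembling the two pieces together with $\gamma$ yields the required $\PP'$.

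The key local observation will be that, since $\gamma$ and the curve of $\PP_1$ containing $\beta$ are in minimal position (being geodesics, no bigon can appear), each arc $\beta$ is essential in its piece, separating it into two regions $R^+, R^-$ each containing at least one boundary component of $S$ (otherwise a bigon would form). If the piece has at least three boundary components of $S$ --- i.e.\ is not already a pair of pants --- then at least one of $R^\pm$ contains two or more of them, and the corresponding $\beta \cup \gamma^\pm$ is essential and non-peripheral. I would then take its geodesic representative $\tilde{c}$ (automatically disjoint from $\gamma$ since a representative can be pushed slightly into $R^\pm$) and use it to split off a simpler subsurface; iterating this procedure, and using any curve of $\PP_0$ directly whenever it already splits off a pair of pants, produces the desired pants decomposition of each piece.

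The main obstacle will be verifying that this peeling always yields exactly the right number of single-surgery curves without ever requiring a curve built from two or more arcs of $\PP$. The planarity of $S$ is crucial here: the dual graph of the disjoint-arc configuration in each planar piece $S^{a,b}$ is a tree (since the arcs are chords attached to the single boundary circle $\gamma$), and this tree structure should allow one to organize the induction so that each newly produced curve bounds a subregion containing precisely the ``correct'' collection of boundary components of $S$, corresponding to an edge of the dual tree of the desired pants decomposition. Once this planar combinatorial argument is in place, combining the two subsurface pants decompositions with $\{\gamma\}$ yields $\PP'$ satisfying $\ell(\PP') \leq \ell(\PP) + \ell(\gamma)$.
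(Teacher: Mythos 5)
Your framework is the same as the paper's: cut along $\gamma$, organize the arcs of $\PP$ in each hemisphere via the dual tree (which is a tree precisely because of planarity), and replace each arc $\beta$ by the geodesic representative of $\beta\cup\gamma^{\pm}$. However, the step you flag as ``the main obstacle'' is a genuine gap, and the hoped-for resolution is false: single-surgery curves do \emph{not} always suffice to complete a pants decomposition. The paper's classification of the components of a pair of pants cut along $\gamma$ includes, besides quadrilaterals, one-holed bigons, one-holed quadrilaterals and hexagons, an \emph{octagon} whose boundary alternates four sub-arcs of $\gamma$ with four arcs of $\partial\PP$, two of which lie on the same pants geodesic. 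For such a component the four single-surgery curves associated to its four arcs bound a four-holed sphere, which needs one further curve to be decomposed into pairs of pants; that curve is necessarily of the form (arc of $\PP$) $\cup$ (sub-arc of $\gamma$) $\cup$ (arc of $\PP$) $\cup$ (sub-arc of $\gamma$), i.e.\ it uses \emph{two} arcs of $\PP$. No reorganization of the tree traversal avoids this: it is forced by the topology of that component.

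The bound $\ell(\PP')\leq\ell(\PP)+\ell(\gamma)$ survives only because of two facts you would still need to supply. First, in the octagon case the two arcs entering the extra curve can be taken to lie on the \emph{same} geodesic of $\PP$, so their combined length is at most $\ell(\PP)$. Second, the sub-arcs of $\gamma$ used must together have length at most $\ell(\gamma)$; the paper arranges this by rooting the tree at an initial edge $c_0$ chosen so that every outgoing edge $c$ has its associated portion $\gamma_c$ of length at most $\ell(\gamma)/2$ (an arbitrary root only yields $\ell(\PP)+2\ell(\gamma)$). Relatedly, your single-surgery curves are only pairwise disjoint if the choice of side $\gamma^{\pm}$ is made coherently for all arcs, which is exactly what the rooted orientation provides; and parallel arcs (the quadrilateral case) produce homotopic surgery curves, of which only one may be kept. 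These are fixable bookkeeping points, but the octagon case is a missing idea without which the proof does not close.
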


\begin{proof}
We begin by noting that all the steps in this proof are topological and
thus we can consider all boundary elements of our sphere as being
topological holes. This is possible because cone points of angle less than
$\pi$ ``behave" like cusps, or boundary geodesics \cite{padrcol}. The only
ambiguity would arise when one considers several cone points of angle
$\pi$, but as we will explain in the sequel (the remark following lemma
\ref{lem:bersde5}), we allow geodesics to be the concatenation of paths
between two cone points of angle $\pi$.

Also note that if $\gamma$ belongs to $\PP$ then there is nothing to prove
as one can take $\PP'$ to be $\PP$. Suppose now that $\PP$ and $\gamma$
intersect.

We shall construct the pants decomposition $\PP'$ with the desired
properties. We begin the construction of $\PP'$ by adding $\gamma$. Note
that the geodesic $\gamma$ separates the sphere into two connected
components, both spheres with boundary. The construction of $\PP'$ works
the same way on both ``hemispheres" (by hemispheres we mean the two
connected components of $S\setminus \gamma$).

We begin with the following observation. First consider a pair of pants
intersected by a curve $\delta$, possibly many times. It's not too
difficult to see that each connected component of the complementary region
on the pair of pants can only be one of the following:
\begin{enumerate}[i)]
\item\label{case1} a quadrilateral, where two arcs belong to $\delta$,
\item\label{case2} a one-holed bigon, where one of the arcs of the bigon
belongs to $\delta$, and the hole is one of the pant curves,
\item\label{case3} a one-holed quadrilateral where every second edge is an
arc of $\delta$,
\item\label{case4}  a hexagon where every second edge is an arc of $\delta$,
\item\label{case5}  an octagon where every second edge is an arc of
$\delta$ and among the remaining edges, two of them are distinct arcs of
the same boundary geodesic of the pair of pants,
\item\label{case6}  a pair of pants if the curve $\delta$ doesn't
intersect the pair of pants at all.
\end{enumerate}

As a consequence of this observation, it follows that $S\setminus \PP
\setminus \gamma$ is a collection of elements of the above list where the
curve $\delta$ above is now our chosen curve $\gamma$.\\

Consider one of the hemispheres, say $S'$. As by hypothesis, $\gamma$
intersects $\PP$, the part of $\PP$ contained in $S'$ consists of a
non-empty set $\arc$ of arcs with endpoints lying on $\gamma$, and a set
of curves entirely contained in $S'$. We start by adding to  $\PP'$ the
set of curves entirely contained in $S'$. The question is what to do with
the set of arcs. From the above case discussion, it follows that
$S'\setminus \arc$ consists in a union of elements of the above list.

Now consider the graph $T$  where vertices correspond to connected
components of $S'\setminus \arc$, and edges to arcs of $\arc$ : two
vertices share an edge if they have a boundary arc in common. Because each
arc separates $S'$, this graph is a tree. Furthermore, all vertices have
valency $2$, $3$ or $4$ (the valency of a vertex corresponds to the number
of arcs of the boundary of the considered connected component that
intersect $\gamma$).

This tree structure allows us to define an algorithm. One begins with some
edge $c_0$ and then proceeds step by step along the two oriented subtrees
$T_1$ and $T_2$ obtained by deleting this edge, where the orientation on
each subtree $T_i$ is the only one for which the remaining vertex of the
edge $c_0$ is the root. In the sequel, we will see that in order to ensure
the inequality we are proving, we may have to be careful with our choice
of initial arc. If one is not careful about this initial choice, one does
obtain a relatively short pants decomposition, but whose length is only
bounded by $ 2\ell(\gamma)+\ell(\PP)$. In order to choose our initial arc
in a satisfactory way, observe that each arc of $\arc$ cuts $\gamma$ in
two subarcs, then associate to each directed edge $c$ of $T\setminus
\{c_0\}=T_1\cup T_2$ the portion of $\gamma$ towards which we are going
and denote it by $\gamma_c$. It is possible to choose an initial edge
$c_0$ so that for any directed edge $c$  of $T\setminus \{c_0\}$,
$$
\ell(\gamma_c)\leq\ell(\gamma)/2.
$$
We choose such an edge as our initial edge.\\

\markleft{F. BALACHEFF AND H. PARLIER}

\noindent The initial algorithm step is the following: the arc $c_0$
separates $\gamma$ into two subarcs $\gamma_1$ and $\gamma_2$. We add to
$\PP'$ the two closed geodesics in the homotopy classes of  $\gamma_1\cup
c_0$ and $\gamma_2\cup c_0$. \\

\noindent Then the algorithm works vertex by vertex along the two subtrees
$T_1$ and $T_2$. Now suppose we are at a vertex $v$ of $T \setminus
\{c_0\}$. We consider the connected component $U$ of $S'\setminus \arc$
corresponding to $v$, and  the arc $c$ corresponding to the oriented edge
with endpoint $v$ (set $c=c_0$ if $v$ coincides with the root of $T_1$ or
$T_2$). We proceed as follows:

\begin{enumerate}[i)]
\item Suppose $U$ is of type \ref{case1}). This means the following arc
$c'$ is freely homotopic to $c$. So we don't add any curves to $\PP'$  and
go to the next connected component (there is only one). \\

\item Suppose $U$ is of type \ref{case2}).  We don't add any curves to
$\PP'$ and the process (in this direction) ends. \\

\item Suppose  $U$ is of type \ref{case3}). One has a one-holed
quadrilateral with one side being $c$ and the opposite side to $c$ is an
arc $c'$. We add to $\PP'$ the closed geodesic in the homotopy class of
$c' \cup \gamma_{c'}$. Observe that $\ell(c' \cup \gamma_{c'})<
\ell(\PP)+\ell(\gamma)$, and now proceed to the arc $c'$.\\

\item Suppose $U$ is of type \ref{case4}). Then one has a hexagon with $c$
and two other arcs $c_1$ and $c_2$ as three non-adjacent sides and the
remaining three sides are arcs of $\gamma$. We add to $\PP'$ the  two
closed geodesics in the homotopy class of $c_1 \cup \gamma_{c_1}$ and $c_2
\cup \gamma_{c_2}$. Observe that $\ell(c_i \cup \gamma_{c_i})<
\ell(\PP)+\ell(\gamma)$  for $i=1,2$, and now proceed to the other arcs
$c_1$ and $c_2$.\\

\item Suppose  $U$ is of type \ref{case5}). This is the most difficult
case to be treated and there are two sub-cases depending on the initial
arc $c$. Recall that on such an octagon, two of the sides are distinct
arcs of the same pants decomposition geodesic. The initial arc $c$ could
be one of these two arcs or not. Denote $\tilde{c}$ the opposite side to
$c$, and denote the two remaining arcs  $c_1$ and $c_2$. (The other four
sides belong to $\gamma$.) \\

\begin{enumerate}[a)]
\item Consider the case where $c$ and $\tilde{c}$ belong to the same pants
decomposition geodesic. We add to $\PP'$ the closed geodesics
corresponding to the curves $\tilde{c} \cup \gamma_{\tilde{c}}$, $c_1 \cup
\gamma_{c_1}$, $c_2 \cup \gamma_{c_2}$ and $c \cup (\gamma_c \setminus
\gamma_{\tilde{c}}) \cup \tilde{c}$. Notice that the length of the least
closed geodesic is bounded by $\ell(\gamma) + \ell(c)+ \ell(\tilde{c})$,
and as $c$ and $\tilde{c}$ belong to the same pants geodesic, we have
$\ell(c)+\ell(\tilde{c})\leq \ell(\PP)$.\\

\item Now consider the case where $c_1$ and $c_2$ belong to the same pants
decomposition geodesic. We begin by adding to $\PP'$ the closed geodesics
corresponding to the curves $\tilde{c} \cup \gamma_{\tilde{c}}$, $c_1 \cup
\gamma_{c_1}$ and $c_2 \cup \gamma_{c_2}$. Now orient $c_1$ and $c_2$ in
such a way that their initial point is closer to the subarc $c$ than their
final point. Then denote by $\alpha$ (respectively $\alpha'$) the subarc
of $\gamma$ going from the initial point of $c_1$ to the initial point of
$c_2$ (resp.  going from the final point of $c_1$ to the final point of
$c_2$). We add to $\PP'$ the closed geodesic in the homotopy class of
$\alpha \star c_2 \star (\alpha')^{-1}\star c_1 ^{-1}$ where $\star$
denote the concatenation.

If the portion $\alpha$  of $\gamma$ is no longer than $\ell(\gamma)/2$,
we have that the total length of the closed geodesic in the homotopy
class $\alpha \star c_2 \star (\alpha')^{-1}\star c_1 ^{-1}$ is less than
$2 \, \ell(\alpha) + \ell(c_1) + \ell(c_2)\leq \ell(\gamma)+ \ell(\PP)$
as $c_1$ and $c_2$ lie on the same pants geodesic. If not, $c$ is
necessarilly our initial arc $c_0$. In that case, we reinitiate the
algorithm from $c_1$. Note that except for possibly the initial arc
$c_1$, all outgoing arcs have associated lengths $\leq\ell(\gamma)/2$ (in
particular $c$ does because we've switched its direction). And our switch
is such that we are in the case v)a) explained above.\\
\end{enumerate}
\end{enumerate}

The curves constructed bound genuine pairs of pants. Furthermore they are
all disjoint. When the process ends, one obtains a full pants
decomposition $\PP'$ of $S'$. By construction, the length of each curve
$\delta$ of $\PP'$ that is not in $\gamma \cup (\PP\cap \PP')$  satisfies
$\ell(\delta')\leq \ell(\gamma)+\ell(\PP)$. We apply the same process to
the other hemisphere $S''$ pf $S \setminus \gamma$ and the result follows.
\end{proof}

\remark In the proof, every curve $\delta\in \PP$ that intersected $\gamma$ was replaced by a curve $\delta'$ whose length was roughly estimated as at most $\ell(\gamma) + \ell(c)$ where $c$ was an arc of $\delta$ from $\gamma$ to $\gamma$. If one considers a bound which uses  hyperbolic trigonometry one can obtain the following bound:

$$
\ell(\delta') <2 \arccosh (\sinh \frac{\ell(\gamma)}{2} \sinh\frac{\ell(c)}{2} ).
$$

In particular, if $\ell(\gamma) \leq 2\arcsinh 1$, then $\ell(\delta')< \ell(c)<\ell(\delta)$. Thus, as a corollary of the proof of lemma \ref{lem:proj}, one obtains the following result.

\begin{corollary}\label{cor:bigsys} Let $S_{n,\max}$ be a hyperbolic sphere with $n$ cusps such that it realizes the optimal Bers' constant, i.e., $S_{n,\max}$ is such that any of its pants decompositions contain at least one geodesic of length greater or equal to $\BB_{0,n}$. Then all simple closed geodesics of $S_{n,\max}$ have length strictly greater than $2\arcsinh 1$.
\end{corollary}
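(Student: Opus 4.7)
The plan is to argue by contradiction. Suppose some simple closed geodesic $\gamma$ on $S_{n,\max}$ satisfies $\ell(\gamma) \le 2\,\arcsinh 1$; the goal is to exhibit a shortest pants decomposition of $S_{n,\max}$ containing $\gamma$ and then invoke Property \ref{property:maxbers2} to reach a contradiction.

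The first step is to produce such a pants decomposition. Start from any shortest pants decomposition $\PP_0$ of $S_{n,\max}$, so that $\ell(\PP_0) = \BB_{0,n}$. Either $\gamma \in \PP_0$, in which case we are done, or $\gamma$ crosses $\PP_0$ and we apply Lemma \ref{lem:proj} to produce a pants decomposition $\PP$ containing $\gamma$. The decisive input is the refined trigonometric bound from the remark preceding the corollary: because $\ell(\gamma) \le 2\,\arcsinh 1$ forces $\sinh(\ell(\gamma)/2) \le 1$, and because $\cosh > \sinh$ implies $\arccosh(\sinh x) < x$, every replacement curve $\delta' \in \PP$ inherits $\ell(\delta') < \ell(\delta) \le \BB_{0,n}$. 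The unchanged curves of $\PP$ retain length at most $\BB_{0,n}$, and $\gamma$ itself has length at most $2\,\arcsinh 1 < \BB_{0,n}$ (the latter inequality holding for $n$ large enough, which we may assume since the conclusion is otherwise vacuous). Hence $\ell(\PP) \le \BB_{0,n}$, and the defining property of $S_{n,\max}$ then forces equality, so $\PP$ is itself a shortest pants decomposition containing $\gamma$.

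The second step applies Property \ref{property:maxbers2} to $\gamma$ and the pants decomposition $\PP$ just constructed. Property \ref{property:maxbers2} asserts that $\gamma$ intersects at least two distinct essentially long geodesics of $\PP$. But $\gamma \in \PP$ is disjoint from every other curve of $\PP$, and $\gamma$ is itself not essentially long in $\PP$ because $\ell(\gamma) \le 2\,\arcsinh 1 < \BB_{0,n} = \ell(\PP)$. Therefore $\gamma$ intersects zero essentially long curves of $\PP$, a contradiction. This rules out the hypothetical $\gamma$ and proves the corollary.

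The main subtlety lies in the refined trigonometric inequality $\ell(\delta') < \ell(\delta)$ for the replacement curves. Without this improvement, Lemma \ref{lem:proj} furnishes only the weaker bound $\ell(\PP) \le \ell(\PP_0) + \ell(\gamma)$, which can exceed $\BB_{0,n}$ and would fail to place the new pants decomposition at the optimal length. Verifying the refined inequality reduces to the hyperbolic identity $\cosh > \sinh$ together with the constraint $\sinh(\ell(\gamma)/2)\leq 1$ from the upper bound on $\ell(\gamma)$.
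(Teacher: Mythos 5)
Your first step is sound and reproduces the paper's key mechanism: with $\sinh(\ell(\gamma)/2)\leq 1$, the refined trigonometric bound in the remark following Lemma \ref{lem:proj} gives $\ell(\delta')<\ell(c)\leq\ell(\delta)$ for every replaced curve, so projecting a shortest pants decomposition onto the complement of $\gamma$ does not increase its length, and you do obtain a shortest pants decomposition $\PP$ containing $\gamma$. The gap is in your second step, where you apply Property \ref{property:maxbers2} \emph{to that particular} $\PP$. The notion of ``essentially long'' is defined relative to a choice of shortest pants decomposition, and Property \ref{property:maxbers2} (as stated, and as proved via the length-expansion and twisting arguments) only provides two geodesics that are essentially long with respect to \emph{some} shortest pants decomposition of $S_{n,\max}$ and that cross $\gamma$; it does not say that these curves belong to every shortest decomposition, nor to the one you have just constructed. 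Those two curves may be essentially long with respect to a different shortest decomposition $\PP_1\neq\PP$ and still cross $\gamma$, so the observation that $\gamma$ is disjoint from every curve of $\PP$ does not contradict the property. As written, the argument does not close.

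The paper runs the two ingredients in the opposite order precisely to avoid this: it first invokes Property \ref{property:maxbers2} to obtain a shortest pants decomposition $\PP$ containing an essentially long curve that $\gamma$ crosses, and only then applies Lemma \ref{lem:proj} with the refined bound to \emph{that} $\PP$. The essentially long curve is then traded for strictly shorter curves while the untouched curves of $\PP$ are kept, which is exactly what the definition of essentially long forbids (equivalently, it contradicts the minimality of $\PP$). Your proof is repaired by adopting this order; the decomposition you build in step one then becomes unnecessary. A minor further point: the inequality $2\arcsinh 1<\BB_{0,n}$ that you need is true, but not because the corollary would ``otherwise be vacuous'' -- it would still have content; it follows instead from $\BB(S)\geq\sys(S)$ together with the monotonicity of Property \ref{property:maxbers3} and the existence of $4$-cusped spheres with systole exceeding $2\arcsinh 1$.
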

\begin{proof}
Suppose this was not the case. By property \ref{property:maxbers2}, a systole $\gamma$ must intersect $2$ an essentially long curve. Consider the associated pants decomposition $\PP$. Now by the above lemma, and in particular by using the bound from the remark above, one can obtain a new pants decomposition $\PP'$ with all geodesics $\delta'$ whose length satisfies
$$
\ell(\delta')<\ell(c)
$$
where $c$ is an arc of a curve from $\PP$. As all arcs from curves of $\PP$ have length at most the length of $\PP$, we obtain $\ell(\delta')<\ell(\PP)$ for all $\delta'$ and thus 
$$
\ell(\PP')<\ell(\PP).
$$
This is a contradiction as pants decomposition $\PP$ was supposed to be of minimal length.
\end{proof}

It might be interesting to know whether maximal surfaces in the closed case have the same property, by which we mean systole length bounded below by some universal constant.

 \section{Bers' constants for punctured spheres}\label{sec:cusps}
 
We are now set to prove the upper-bounds on Bers' constants for punctured spheres.

\begin{theorem}\label{th:berspuncturedsphere}
The following inequality holds for $n\geq 4$:

$$
\BB_{0,n} \leq 30 \sqrt{2\pi (n-2)}.
$$

\end{theorem}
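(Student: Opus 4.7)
My plan is to prove the bound by strong induction on $n$, using Lemma \ref{lem:geocheeger} to produce a short separating curve, Lemma \ref{lem:proj} to merge it into a pants decomposition, and a pair-of-pants ``cap'' construction to reduce to closed cusped spheres with strictly fewer cusps.

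Starting from a surface $S_{\max} \in \MM_{0,n}$ realizing $\BB_{0,n}$ (which exists by Property \ref{property:maxbers1}), I would apply Lemma \ref{lem:geocheeger} with $\theta = 0$: this gives a simple closed geodesic $\gamma \subset S_{\max}$ that separates the sphere into two hemispheres $S_1, S_2$ with respective cusp counts $n_1 \leq n_2$ satisfying $n/4 \leq n_i \leq 3n/4$, and with
$$ \ell(\gamma) \leq 4 \sqrt{2\pi\, n_2 - 4\pi} \leq 4\sqrt{2\pi(n-2)}. $$
The first hurdle is that each $S_i$ is a sphere with boundary geodesic plus cusps, not a closed cusped sphere, so the inductive hypothesis does not apply directly. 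To remedy this, I would glue onto each $S_i$ a pair of pants with one boundary geodesic of length $\ell(\gamma)$ and two cusps, obtaining a closed cusped sphere $\tilde{S}_i$ with $n_i + 2$ cusps. Since $n_i + 2 \leq 3n/4 + 2 < n$ for $n$ large enough (treating the small cases separately), the inductive hypothesis yields a pants decomposition $\tilde{\PP}_i$ of $\tilde{S}_i$ with $\ell(\tilde{\PP}_i) \leq 30\sqrt{2\pi n_i}$.

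I would then apply Lemma \ref{lem:proj} on $\tilde{S}_i$ to $\tilde{\PP}_i$ and the curve $\gamma$, producing a pants decomposition $\tilde{\PP}_i' \ni \gamma$ of $\tilde{S}_i$ with
$$ \ell(\tilde{\PP}_i') \leq 30\sqrt{2\pi n_i} + 4\sqrt{2\pi(n-2)}. $$
Because the glued pair of pants has $\gamma$ as one of its boundary geodesics, restricting $\tilde{\PP}_i'$ to $S_i$ gives an honest pants decomposition of $S_i$ (with $\gamma$ as a boundary component). Joining the two restrictions across $S_1$ and $S_2$ together with $\gamma$ yields a pants decomposition of $S_{\max}$ whose longest curve has length at most $30\sqrt{2\pi \cdot 3n/4} + 4\sqrt{2\pi(n-2)}$.

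The main obstacle, and the source of the specific constant $30$, is closing the induction: a direct computation shows that $30\sqrt{2\pi \cdot 3n/4} + 4\sqrt{2\pi(n-2)} \leq 30\sqrt{2\pi(n-2)}$ only once $n$ is above a large explicit threshold, which means the ``base case'' must in fact handle a substantial initial range of $n$. For small $n$ this is absorbed by Corollary \ref{cor:basicpuncture}, but for the intermediate range I would expect to refine the inductive step itself---most naturally by replacing the cap-and-induct step with a direct iteration of Lemma \ref{lem:geocheeger} on pieces that carry both cusps and geodesic boundaries, verifying that the Besicovitch-type argument still applies once the family $\FF$ is redefined to balance cusps and boundaries together (the crucial step being that the pigeonhole on the two sub-arcs of $\gamma$ still forces one of the concatenations $c \cup \gamma'$, $c \cup \gamma''$ to produce a curve back in this adapted family). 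This avoids paying the additive term $4\sqrt{2\pi(n-2)}$ at every level and tightens the constant to the declared $30$.
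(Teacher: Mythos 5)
Your strategy is exactly the paper's: induct on $n$, use Lemma \ref{lem:geocheeger} (with $\theta=0$) to find the short separating geodesic $\gamma$, cap each hemisphere with a two-cusped pair of pants, apply the inductive hypothesis to $\tilde{S}_i$, and feed $\gamma$ back in via Lemma \ref{lem:proj}. The one genuine gap is at the end, and it is self-inflicted: you discard the information that $\ell(\gamma)$ is controlled by $n_2$ rather than by $n$. Lemma \ref{lem:geocheeger} gives $\ell(\gamma)\leq 4\sqrt{2\pi(n_2-2)}\leq 4\sqrt{2\pi\,n_2}$, and the inductive hypothesis applied to $\tilde{S}_2$ (which has $n_2+2$ cusps) gives $\ell(\PP_2)\leq 30\sqrt{2\pi\,n_2}$, so the combined estimate is $(30+4)\sqrt{2\pi\,n_2}\leq 34\sqrt{2\pi\cdot\tfrac{3n}{4}}$. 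The induction then closes as soon as $34\sqrt{3n/4}\leq 30\sqrt{n-2}$, i.e. $867n\leq 900(n-2)$, i.e. $n\geq 55$; since Corollary \ref{cor:basicpuncture} covers all $n\leq 63$, the two ranges overlap and the proof is complete with $C=30$. By instead bounding $\ell(\gamma)$ by $4\sqrt{2\pi(n-2)}$ you get the inequality $30\sqrt{3n/4}\leq 26\sqrt{n-2}$, which indeed only holds for $n\geq 1352$, and the intermediate range $64\leq n\leq 1351$ is not covered by Corollary \ref{cor:basicpuncture}.

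Consequently, the ``refinement'' you sketch in the last paragraph --- redefining the family $\FF$ to balance cusps and boundary components and iterating the Besicovitch argument on pieces with boundary --- is not needed and is not what the paper does; it would amount to reproving a version of Lemma \ref{lem:cutspherewithboundary} inside this proof. The only repair required is the sharper bookkeeping above: both the inductive term and the $\ell(\gamma)$ term must carry the factor $\sqrt{n_2}\leq\sqrt{3n/4}$ before you compare with $30\sqrt{2\pi(n-2)}$. With that change your argument is the paper's argument.
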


\begin{remark}

In particular for any hyperbolic sphere $S$ with $n$ cusps we have

$$
\BB(S) \leq 30 \sqrt{\area(S)}.
$$

\end{remark}

\begin{proof}
We shall prove the theorem by induction. Note that by the bound from corollary \ref{cor:basicpuncture} the theorem is true for $n\leq 63$. \\

Now take $n\geq n_0=63$ and $S$ a hyperbolic sphere with $n$ cusps. By lemma \ref{lem:geocheeger}, one can find a ``short" geodesic $\gamma$ such that both connected components of $S\setminus \gamma$ contain at least $\frac{n}{4}$ cusps. Let $n_1\leq n_2$ with $n_1+n_2$ be the number of cusps of $S_1$ and $S_2$, the two connected components of the complementary region on $S$ to $\gamma$. The bound on the length of $\gamma$ from lemma \ref{lem:geocheeger} is precisely 

$$
\ell(\gamma) \leq 4\sqrt{2\pi (n_2-2)}
$$
as a cusp is a cone point of angle zero.\\

 Consider two surfaces $\tilde{S}_1$ and $\tilde{S}_2$ obtained by pasting a pair of pants (with two cusps and one simple closed geodesic with the same length as $\gamma$) onto $S_1$ and $S_2$. Note that we do not impose anything on the twist parameter of the gluing. The surfaces $\tilde{S}_1$ and $\tilde{S}_2$ are hyperbolic spheres with respectively $n_1+2$ and $n_2+2$ cusps.\\
 
The idea will be to use a short pants decomposition on $\tilde{S}_1$ and $\tilde{S}_2$ given by induction and then to use lemma \ref{lem:proj} to find short pants decomposition on these surfaces that contains $\gamma$, and thus one finds a short pants decomposition of both $S_1$ and $S_2$. From these pants decompositions, one finds the short pants decomposition of $S$ by pasting $S_1$ to $S_2$ with their pants decompositions to obtain $S$. \\

%Obviously the difficult case is going to be for $\tilde{S}_2$ as it has more cusps (or exactly the same number) and thus potentially a longer pants decomposition.\\

By induction, there is a pants decomposition $\PP_i$ of $\tilde{S}_i$ for $i=1,2$ with 
$$
\ell(\PP_i) \leq C \sqrt{2\pi n_2}
$$
with $C=30$. Using lemma \ref{lem:proj}, we can find a pants decomposition $\PP$ of $S$ containing $\gamma$ this time and such that 

\begin{eqnarray}
\nonumber \ell(\PP) &\leq& \max_{i=1,2}\ell(\PP_i) +\ell(\gamma)\\
\nonumber &\leq& C \sqrt{2\pi n_2} + 4\sqrt{2\pi(n_2-2)}\\
\nonumber &\leq& (C+4) \sqrt{2\pi n_2}\\
\nonumber &\leq& (C+4) \sqrt{{3 \over 4}2\pi n}\\
\nonumber
\end{eqnarray}
as $n_2\leq {3 \over 4} n$.
Now observe that $C$ is the smallest integer number such that 
$$
(C+4) \sqrt{{3 \over 4} n}\leq C \sqrt{n-2}
$$
for any $n\geq n_0$. Thus the theorem is proved.
\end{proof}

\begin{remark}
In the proof, we have chosen $n_0$ such that the constant $C$ is small as possible. 
\end{remark}

 \section{Bers' constants for hyperbolic spheres with cone points of
   angle $\pi$}\label{sec:cone}
 
 In this section, we prove the analog of Theorem \ref{th:berspuncturedsphere} for hyperbolic spheres with cone points of angle $\pi$. This result will be used in the next section in order to obtain the upper-bound on Bers' constants of hyperelliptic surfaces (see theorem \ref{th:bershypsurf}). \\ 

First we will need the following result in order to initiate the induction.
  
\begin{lemma}\label{lem:bersde5}
Let $S$ be a hyperbolic sphere with $n \geq 5$ cone points of angle $\pi$. 
Then there exists a pants decomposition $\gamma_1,\ldots,\gamma_{n-3}$ satisfying
$$
\ell(\gamma_k)\leq  4k \, \ln {4\pi(n-2) \over k}
$$
for $k=1,\ldots,n-3$.
\end{lemma}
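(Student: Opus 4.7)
The plan is to adapt Buser's classical greedy construction for Bers' constants, combined with an area-packing estimate in the same spirit as Lemma \ref{lem:geocheeger}.

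I would build the pants decomposition inductively. After choosing disjoint simple closed geodesics $\gamma_1,\dots,\gamma_{k-1}$ on $S$ (using the extended notion of geodesic that includes concatenations of arcs through pairs of cone points of angle $\pi$), assumed pairwise non-homotopic and extendable to a pants decomposition, I take $\gamma_k$ to be a shortest simple closed geodesic disjoint from $\gamma_1\cup\dots\cup\gamma_{k-1}$, not homotopic to any of them, and whose addition still allows the collection to be completed to a pants decomposition. Since an $n$-holed sphere has pants decompositions of size $n-3$, this produces $\gamma_1,\dots,\gamma_{n-3}$ with $\ell(\gamma_1)\leq\ell(\gamma_2)\leq\dots\leq\ell(\gamma_{n-3})$.

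To prove the length bound, I would run an area-packing argument. By Gauss-Bonnet, $\area(S)=\pi(n-4)$, so in particular $\area(S)\leq 4\pi(n-2)$. The minimality of $\gamma_k$ forces, on the cut surface $S\setminus(\gamma_1\cup\dots\cup\gamma_{k-1})$, a substantial collar around each of the $k$ previously chosen boundary geodesics: any simple closed geodesic still available to extend the partial decomposition has length at least $\ell(\gamma_k)$, which implies that the two-sided half-collars of width $w$ around $\gamma_1,\dots,\gamma_k$ are embedded and pairwise disjoint for all $w$ up to some critical value. The total area of these collars is comparable to $k\,\ell(\gamma_k)\sinh w$; comparing with $\area(S)\leq 4\pi(n-2)$ and optimising over $w$ of order $\ell(\gamma_k)/(4k)$ yields
\[
\ell(\gamma_k)\leq 4k\,\ln\!\left(\frac{4\pi(n-2)}{k}\right).
\]

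The main obstacle will be the packing estimate itself, and in particular justifying that embedded disjoint collars of width of order $\ell(\gamma_k)/k$ can really be built around the first $k$ curves simultaneously --- this is what forces the constants in the bound. Matching the precise coefficient $4$ requires carefully combining the standard Collar lemma with the disjointness forced by the greedy choice, and also accounting for the area contributions of the cone points of angle $\pi$, which play the role of extra embedded disks in the packing. A secondary but non-trivial point is the topological verification that the greedy extension step can always be carried out on an $n$-holed sphere with cone points of angle $\pi$; this is where the extended notion of geodesic (arcs through two cone points of angle $\pi$) is essential, since otherwise one might get stuck before reaching a full pants decomposition.
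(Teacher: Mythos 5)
Your overall framework (grow collars, compare with the Gauss--Bonnet area $\pi(n-4)$, get a logarithmic bound) is the right one, but the central step of your argument --- deducing from the minimality of $\gamma_k$ that embedded, pairwise disjoint collars of width $w\sim \ell(\gamma_k)/(4k)$ exist around \emph{all} of $\gamma_1,\dots,\gamma_k$ --- does not hold, and the lemma cannot be proved this way. Minimality of $\gamma_k$ only controls curves that are \emph{shorter than} $\ell(\gamma_k)$, disjoint from $\gamma_1,\dots,\gamma_{k-1}$, and admissible. A collar catastrophe of width $w$ around some $\gamma_i$ ($i<k$) produces a closed curve of length roughly $\ell(\gamma_i)+2w$; since the greedy ordering gives $\ell(\gamma_i)\le\ell(\gamma_k)$ with no gap, the inequality $\ell(\gamma_i)+2w\ge\ell(\gamma_k)$ is vacuous whenever $\ell(\gamma_i)$ is comparable to $\ell(\gamma_k)$, so no lower bound on the admissible collar width follows. (There is also no reason the maximal embedded collar around $\gamma_k$ itself has width $\ell(\gamma_k)/(4k)$; by the collar lemma the guaranteed width \emph{shrinks} as the curve gets longer.) A tell-tale sign that the packing premise is false: if it were true, your computation with $\sinh x\ge x$ would give $\ell(\gamma_k)\lesssim\sqrt{\pi(n-2)}$ uniformly in $k$, a square-root bound with no $k$-dependence, which is far stronger than the stated $4k\ln(4\pi(n-2)/k)$ and is not what this type of argument can deliver.

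The proof in the paper (following Buser's Theorem 5.2.3) reverses the logic: one does not pick $\gamma_{k+1}$ greedily and then bound it, one \emph{constructs} it. Having found $\gamma_1,\dots,\gamma_k$, let $S^k$ be the remaining (non-pair-of-pants) piece and grow the one-sided neighbourhood $Z(r)$ of the whole boundary $\partial S^k$, whose area is $\ell(\partial S^k)\sinh r$. The first catastrophe (the neighbourhood ceases to be embedded, or --- the new case specific to this setting --- it hits a cone point of angle $\pi$) occurs at some $r$ with $\sinh r\le\area(S)/\ell(\partial S^k)$, and the curve it produces has length controlled by $\ell(\partial S^k)+O(r)$; the bound $4k\ln\frac{4\pi(n-2)}{k}$ then comes from a recursion on the total boundary length $\ell(\partial S^k)$, not from a packing lower bound. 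The initial step is likewise explicit: grow a ball around a cone point until it either meets another cone point (yielding a geodesic arc doubled into a closed geodesic in the extended sense) or self-intersects. If you want to salvage your write-up, you should replace the greedy-plus-packing step by this catastrophe construction and carry the quantity $\ell(\partial S^k)$ through the induction; the points you flag at the end (admissibility of the produced curves, and the role of doubled arcs between cone points) are then handled by the explicit construction rather than needing a separate topological argument.
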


\begin{remark}
Here we authorize a closed geodesic to be the concatenation with its inverse of a geodesic arc from a cone singularity to another. As the cone angles are equal to $\pi$, this produces a simple closed geodesic of length twice the length of the arc between the two cone points.
\end{remark}

\begin{proof}
We proceed as in the proof of \cite[Theorem 5.2.3]{bubook}. 
Let $\{p_1,\ldots,p_n\}$ denote the cone points of angle $\pi$. First we cut $S$ along the closed geodesics $\gamma_1,\ldots, \gamma_m$ of length $\leq 2 \arcsinh 1$ if there are any. By the collar theorem these geodesics are pairewise disjoint and $m\leq n-3$. If there are no such geodesics on $S$, we consider the (closed) ball of radius $r$ centered at $p_1$. For small $r>0$ this ball is embedded and contains one cone point (its center). We increase $r$ until something happens: either the
ball stops to be embedded, or meets another cone point. We denote by $r_1$ the corresponding radius. The area of $B(p_1,r_1)$ is equal to
$\pi (\cosh(r_1)-1)$ and is less than the area of the surface $\area(S)= \pi(n-4)$, so $r_1 \leq \arccosh\,{2(n-2)}$. \\

\noindent {\it First case.} If the ball meets another cone point, say $p_2$ up to reordering the cone points, we get the first closed geodesic denoted by $\gamma$ made of the concatenation with its inverse of a geodesic arc from $p_1$ to $p_2$. \\

\noindent {\it Second case}.  If the ball admits a point of auto-intersection, there exists a geodesic loop $\tilde{\gamma}$ based at $p_1$ of length $2r_1$ as it can be easily seen by passing to the universal covering. We decompose $S \setminus \tilde{\gamma}=\tilde{S}_1\cup \tilde{S}_2$ into two closed spheres with boundary component $\tilde{\gamma}$ and cone points such that the numbers of their cone singularities $n_1$ and $n_2$ satisfy $n_1+n_2-1$ and $n_2 \geq n_1 \geq 2$. As the boundary of both $\tilde{S}_1$ and $\tilde{S}_2$ is convex, there exists a closed geodesic $\gamma_i$ in each $\tilde{S}_i$ corresponding to the homotopy class of $\tilde{\gamma}$ whose length is less than $2 r_1$. \\

Now we proceed by induction as in \cite[Theorem 5.2.3]{bubook}. Assume that after many such steps, the pairewise disjoint closed geodesics $\gamma_1,\ldots,\gamma_k$ have been found, and let $S^k$ be a disjoint reunion of sphere with one boundary component which remains after cutting $S$ open along $\gamma_1,\ldots,\gamma_k$ and removing the connected components which are pair of pants or a cylinder with a cone singularity in its interior. Assume by induction that 
$$
\ell(\partial S^k)\leq  4k \, \ln {4\pi(n-2) \over k}
$$
and that 
$$
\ell(\gamma_j)\leq  4j \, \ln {4\pi(n-2) \over j}
$$
for $j=1,\ldots,k$.

For $r>0$, we define $Z(r)$ to be the points of $S^k$ at distance less than $r$ from $\partial S^k$. For $r$ small enough, $Z(r)$ is a disjoint union of half-collars of area
$$
\area ( Z(r) )=\ell (\partial S^k) \sinh r.
$$
When $r$ grows, two types of catastrophes can happen: the first one is that one of the half-collars ceases to be embedded. That case corresponds to the second case treated in \cite[Theorem 5.2.3]{bubook}. The second possible catastrophe is that one of the half-collars meets a cone point singularity. In that case, we can consider the tubular neighborhood of the union of the boundary curve corresponding to the half-collar and of a geodesic arc going from the boundary to the cone point. The boundary of this tubular neighborhood is a non-contractible curve and we define $\gamma_{k+1}$ to be the unique closed geodesic in this homotopy class. So we cut $S^k$ open along $\gamma_{k+1}$ and define $S^{k+1}$ in the obvious way. We can argue as in  \cite[Theorem 5.2.3]{bubook} to prove that 
$$
\ell(S^{k+1})\leq  4(k+1) \, \ln {4\pi(n-2) \over (k+1)}.
$$
\end{proof}

\begin{remark}
We then have
$$
\BB_{\pi,n} \leq 4 \ln 6 \pi \, (n-3).
$$
\end{remark}

Using this, we can now apply our method to obtain the following upper-bound on Bers' constants of spheres with cone points of angle $\pi$.

\begin{theorem}\label{th:berssphereconic}
The following inequality holds for $n\geq 5$:

$$
\BB_{\pi,n} \leq 10 \ln 6\pi \,  \sqrt{n-4} \left(<17 \sqrt{\pi(n-4)}\, \right).
$$

\end{theorem}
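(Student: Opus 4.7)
The plan is to prove the theorem by induction on $n$, transplanting the inductive architecture of Theorem~\ref{th:berspuncturedsphere} to the cone point setting. Lemma~\ref{lem:geocheeger} specialized to $\theta=\pi$ supplies the short separating geodesic, and Lemma~\ref{lem:proj} supplies the projection tool for carrying short pants decompositions across the cut. The remark following Lemma~\ref{lem:bersde5}, which permits closed geodesics to pass through cone points of angle $\pi$, is exactly the flexibility needed for Lemma~\ref{lem:proj} to remain available in the present regime.

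For the base case I would use the estimate $\BB_{\pi,n}\leq 4\ln(6\pi)(n-3)$ from the remark following Lemma~\ref{lem:bersde5}; this already implies the desired bound for all $n\leq n_0$, where $n_0$ is the largest integer satisfying $4(n-3)\leq 10\sqrt{n-4}$. For the inductive step at $n>n_0$ I would take a hyperbolic sphere $S$ with $n$ cone points of angle $\pi$ and apply Lemma~\ref{lem:geocheeger} to produce a separating geodesic $\gamma$ of length
\[
\ell(\gamma)\leq 4\sqrt{\pi(n_2-4)},
\]
whose two complementary components $S_1,S_2$ each contain at least $n/4$ cone points (with $n_1\leq n_2$, hence $n_2\leq 3n/4$). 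To each $S_i$ I would paste an auxiliary hyperbolic disk with boundary geodesic of length $\ell(\gamma)$ and three cone points of angle $\pi$, three being the minimal count producing positive hyperbolic area by Gauss-Bonnet. The resulting closed spheres $\tilde S_i$ carry $n_i+3$ cone points of angle $\pi$, so the inductive hypothesis yields pants decompositions $\tilde\PP_i$ with $\ell(\tilde\PP_i)\leq 10\ln(6\pi)\sqrt{n_i-1}$. Applying Lemma~\ref{lem:proj} on each $\tilde S_i$ then produces a pants decomposition containing $\gamma$ of length at most $\ell(\tilde\PP_i)+\ell(\gamma)$; restricting back to the original $S_i$ and regluing along $\gamma$ delivers a pants decomposition $\PP$ of $S$ with
\[
\ell(\PP)\leq 10\ln(6\pi)\sqrt{n_2-1}+4\sqrt{\pi(n_2-4)}.
\]

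To close the recursion I would substitute $n_2\leq 3n/4$ into this estimate and verify that the right-hand side is bounded by $10\ln(6\pi)\sqrt{n-4}$ for $n\geq n_0$, exactly as in the proof of Theorem~\ref{th:berspuncturedsphere}. The main obstacle is the quantitative tuning of $n_0$: the coefficient $10\ln(6\pi)$ must be large enough to dominate the combined contributions of the recursive term $\ln(6\pi)\sqrt{n_2-1}$ and the Besicovitch term $\sqrt{\pi(n_2-4)}$ under the scaling $n_2\leq 3n/4$, yet $n_0$ must be small enough that Lemma~\ref{lem:bersde5} still certifies the base case within the same coefficient. A secondary point to check is that the pasted disks with three cone points of angle $\pi$ and arbitrary prescribed boundary length genuinely exist as hyperbolic surfaces, which is a routine Gauss-Bonnet computation. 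Apart from this balancing, every step transfers directly from the punctured sphere argument.
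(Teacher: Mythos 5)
Your overall architecture --- induction on $n$, the Besicovitch cut from Lemma~\ref{lem:geocheeger} with $\theta=\pi$, the projection Lemma~\ref{lem:proj}, and the base case $n\leq 8$ from Lemma~\ref{lem:bersde5} --- is exactly the paper's. The one step you do differently is the capping, and that is where the argument breaks. The paper does not paste an auxiliary piece onto $S_i$ at all: it \emph{folds} the boundary geodesic $\gamma$ onto itself, identifying the two subarcs of $\gamma$ determined by a pair of opposite points. This creates exactly two new cone points of angle $\pi$, adds no area, and yields $\tilde S_i$ with $n_i+2$ cone points. Your cap (a disk with three cone points of angle $\pi$; it does exist, with area $\pi$ by Gauss--Bonnet) yields $n_i+3$ cone points, and this extra cone point costs you twice. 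First, the induction is not well-founded for $9\leq n\leq 12$: there one may have $n_1=3$, hence $n_2+3=n$, so you would be invoking the theorem for a sphere with the \emph{same} number of cone points; and your base case, resting on $4(n-3)\leq 10\sqrt{n-4}$, covers only $n\leq 8$, leaving $9\leq n\leq 12$ unproved.

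Second, and more seriously, the recursion you propose does not close --- this is not a matter of ``tuning $n_0$.'' Your inductive step gives
$$
\ell(\PP)\ \leq\ C\sqrt{n_2-1}+4\sqrt{\pi(n_2-4)},\qquad n_2\leq \tfrac{3n}{4},\quad C=10\ln 6\pi\approx 29.4,
$$
and you need this to be at most $C\sqrt{n-4}$. Dividing by $\sqrt{n}$ and letting $n\to\infty$, the requirement becomes $\tfrac{\sqrt3}{2}\left(C+4\sqrt{\pi}\right)\leq C$, i.e.\ $C\geq 4\sqrt{3\pi}\,(2+\sqrt3)\approx 45.8$, which your $C\approx 29.4$ fails; for finite $n$ the required constant is even larger. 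The source of the mismatch is that the recursive term carries no $\sqrt{\pi}$ while the Besicovitch term carries $4\sqrt{\pi}\approx 7.1$ rather than $4$; the constant-chasing in this theorem is genuinely tight (indeed even the paper's own numerical verification of $C\geq f(n)$ at $n=9$ is delicate), so the lossier surgery cannot be absorbed. To repair the proposal you must either replace the cap by the paper's folding construction, or accept a larger constant in the statement and restart the base case accordingly.
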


\begin{remark}
We can obtain similar results for hyperbolic spheres with cone singularities of angles
$\theta_1, \ldots,\theta_n$ between $0$ and $\pi$. The constant involved in
the inequality will depend on the different values of the
cone angles $\theta_1,\ldots,\theta_n$ and will be uniformly bounded from above by some universal constant which comes from the extremal case where all angles are equal to $0$, i.e., there are $n$ cusps.
\end{remark}

\begin{proof}
By lemma \ref{lem:bersde5} the result is true for $5\leq n\leq 8$.
Now take $n\geq 9$ and $S$ a hyperbolic sphere with $n$ cone points of
angle $\pi$. By lemma \ref{lem:geocheeger}, one can find a short geodesic $\gamma$ such that both connected components of $S\setminus \gamma$ contain at least $\frac{n}{4}$ cone points. Let $n_1\leq n_2$ with $n_1+n_2$ be the number of cone points of $S_1$ and $S_2$, the two connected components of the complementary region on $S$ to $\gamma$. Observe that necessarily both $S_1$ and $S_2$ contains at least three cone points. The bound on the length of $\gamma$ from lemma \ref{lem:geocheeger} is precisely 

$$
\ell(\gamma) \leq 4\sqrt{\pi (n_2-2)}.
$$

Consider two surfaces $\tilde{S}_1$ and $\tilde{S}_2$ obtained as follows from
$S_1$ and $S_2$ : fix a pair of opposite points on $\gamma$ and identify the
two geodesic subarcs of $\gamma$ defined by this pair of points such that the resulting surfaces $\tilde{S}_1$ and $\tilde{S}_2$ are hyperbolic spheres with respectively $n_1+2$ and $n_2+2$ cone points of angle $\pi$.\\
 
By induction, there is a pants decomposition $\PP_2$ of $\tilde{S}_2$ with 

$$
\ell(\PP_2) \leq C \sqrt{\pi(n_2-2)} 
$$
where $C$ denote for simplicity the value $10 \ln 6\pi$.
Here we authorize a closed geodesic to be the concatenation with its inverse of a geodesic arc from a cone singularity to another. 
 Using lemma \ref{lem:proj}, we can find a pants decomposition $\PP_2'$ containing $\gamma$ this time and such that 

$$
\ell(\PP_2') \leq \ell(\PP_2) +\ell(\gamma).
$$
Thus

\begin{eqnarray}
\nonumber \ell(\PP_2') &\leq&(C+4)\sqrt{\pi (n_2-2)}\\
\nonumber &\leq&(C+4)\sqrt{\pi ({3n \over 4}-2)}\\
\nonumber &\leq&{\sqrt{3} \over 2}(C+4)\sqrt{\pi(n-{8\over 3})}\\
\nonumber
\end{eqnarray}

This above quantity is indeed smaller than 

$$
C \sqrt{\pi(n-4)}
$$
if for any $n\geq 9$ 
$$
C\geq f(n):={4\sqrt{3n -8} \over 2\sqrt{n-4}-\sqrt{3n-8}}.
$$
But the function $f$ is easily checked to be decreasing and as $f(9)<11$ this ends the proof.
%for $n\geq n_0$ if 
%$$
%C\geq{4\sqrt{{3n_0 \over 4}-2} \over \sqrt{n_0}-\sqrt{{3n_0\over4}-2}}.
%$$

\end{proof}

\section{Bers' constants for hyperelliptic surfaces}\label{sec:hyper}

Using the result of the preceding section, we shall prove roughly asymptotically optimal upperbounds on Bers' constants for hyperelliptic surfaces. The general strategy will be to look at the sphere quotient of the hyperelliptic surface, find a short pants decomposition by the results of the preceding section and lift the pants decomposition by the hyperelliptic involution. The lift of a pants decomposition, although not a pants decomposition, lifts to a multicurve whose compementary region is a collection of 3 and 4-holed spheres. This is the object of the first lemma.

\begin{lemma}
We consider a hyperelliptic surface $\tilde{S}$ and its quotient sphere $S= \tilde{S}/\sigma$ by the hyperellipic involution $\sigma$. A pants decomposition $\PP=\{\gamma_1,\ldots,\gamma_{2g-1}\}$ of $S$ lifts to a multicurve $\mu$ on $S$ with complementary region a collection of $3$ or $4$-holed spheres and their lengths satisfy
$$
\ell(\mu)\leq 2 \, \ell(\PP).
$$ 
\end{lemma}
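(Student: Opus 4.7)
The plan is to set $\mu := \pi^{-1}(\PP)$, where $\pi: \tilde S \to S$ is the branched double cover associated to the hyperelliptic involution $\sigma$ (with branch locus precisely the $2g+2$ cone points of $S$), and then to verify the length bound and the topological description of $\tilde S \setminus \mu$ separately.

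For the length estimate I would treat each $\gamma_i \in \PP$ individually. If $\gamma_i$ avoids all cone points, its monodromy in the cover is $\sigma^{k_i}$, where $k_i$ is the number of cone points on either side of $\gamma_i$ (the two counts agree mod $2$). When $k_i$ is odd, $\pi^{-1}(\gamma_i)$ is a single connected double cover of $\gamma_i$, hence a closed geodesic of length $2\ell(\gamma_i)$; when $k_i$ is even, $\pi^{-1}(\gamma_i)$ is a pair of disjoint geodesics each isometric to $\gamma_i$. If $\gamma_i$ is instead an ``authorized'' closed geodesic (a doubled arc of length $\ell(\gamma_i)=2L$ between two cone points, as in the remark following Lemma \ref{lem:bersde5}), then $\pi^{-1}(\gamma_i)$ is a single smooth closed geodesic on $\tilde S$ of length $2L=\ell(\gamma_i)$ passing through the two corresponding Weierstrass points; smoothness at those fixed points of $\sigma$ follows because $d\sigma$ reverses tangent directions there. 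In every case each component of $\pi^{-1}(\gamma_i)$ has length at most $2\ell(\gamma_i)$, and taking the maximum over $i$ gives $\ell(\mu) \leq 2\ell(\PP)$.

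For the topological claim, fix a pair of pants $P \subset S \setminus \PP$. Its three ``legs'' are each either a curve of $\PP$ or a cone point of $S$; write $c$ for the number of cone-point legs and $c'$ for the number of curve legs with nontrivial monodromy. Since the product of the monodromies around the three legs is trivial in $\Z/2$ (they jointly bound $P$), $c+c'$ is even; a pair of pants with three cone-point legs cannot arise when $g\ge 1$, so the admissible cases are $(c,c')\in\{(0,0),(0,2),(1,1),(2,0)\}$. In the case $(0,0)$, $\pi^{-1}(P)$ is the disjoint union of two isometric copies of $P$, each a $3$-holed sphere. In each of the three remaining cases $\pi^{-1}(P)$ is connected; applying Riemann--Hurwitz to the branched double cover $\pi^{-1}(P) \to P$ (ramified of order $2$ above each cone-point leg) together with the direct boundary count (one circle for each leg of nontrivial monodromy, two disjoint circles for each leg of trivial monodromy) shows that $\pi^{-1}(P)$ has $4-c$ boundary circles and contains exactly $c$ Weierstrass points in its interior, for a total of $4$ ``holes'' under the convention that interior Weierstrass points are counted as holes alongside boundary circles.

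The main obstacle is purely bookkeeping in the connected cases: the number of genuine boundary circles of $\pi^{-1}(P)$ takes the three different values $4$, $3$ and $2$ as $c$ runs through $0,1,2$, and the uniform statement ``$3$- or $4$-holed sphere'' only emerges once Weierstrass points interior to $\pi^{-1}(P)$ are also counted as holes. Once this convention is fixed, each sub-case reduces to a standard Euler-characteristic computation for branched double covers, and assembling the pieces over all pairs of pants of $S\setminus\PP$ yields the lemma.
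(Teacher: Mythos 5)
Your strategy is the same as the paper's: lift each curve of $\PP$ according to its $\Z/2$ monodromy, then lift the complementary pieces and identify them by an Euler characteristic and boundary count. The length analysis is correct in all three cases (including the doubled-arc case, where the preimage is a single smooth closed geodesic of the same length passing through two Weierstrass points). The genuine problem is your final topological bookkeeping, specifically the ``convention'' that interior Weierstrass points count as holes. That convention is not what the lemma asserts, and it would break the lemma's use in Theorem \ref{th:bershypsurf}, where each \emph{genuine} $4$-holed sphere of $\tilde S\setminus\mu$ receives one additional simple closed geodesic to complete $\mu$ to a pants decomposition. Your case $(c,c')=(1,1)$ is a cylinder of $S\setminus\PP$ with one interior cone point; its preimage is connected with $\chi=-1$ and three genuine boundary circles, i.e., a bona fide pair of pants in $\tilde S$. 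The Weierstrass point in its interior is a smooth point of $\tilde S$, not a hole; labelling this piece a $4$-holed sphere would call for inserting a non-peripheral simple closed curve into a pair of pants, which is impossible (and unnecessary). The correct, convention-free conclusion is: $(0,0)$ yields two disjoint $3$-holed spheres exchanged by the involution, $(0,2)$ yields one genuine $4$-holed sphere, and $(1,1)$ yields one genuine $3$-holed sphere, each invariant under the involution.

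Your fourth case $(c,c')=(2,0)$ is vacuous and should be discarded rather than reinterpreted. By Gauss--Bonnet, a disk with geodesic boundary and two interior cone points of angle $\pi$ has area $-2\pi+2(2\pi-\pi)=0$, so the geodesic representative of a curve enclosing exactly two angle-$\pi$ cone points degenerates to the doubled arc joining them; in the convention of the remark following Lemma \ref{lem:bersde5} that doubled arc is itself a curve of $\PP$, not the boundary of a complementary component. Hence no piece of $S\setminus\PP$ has two cone-point legs, and the annulus (``two boundary circles'') your count would produce upstairs --- which would force two isotopic components of $\mu$ --- never occurs. With $(2,0)$ removed and the Weierstrass-point convention dropped, your Riemann--Hurwitz computations give exactly the paper's proof.
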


\begin{proof}
First note that the lift of a closed geodesic $\gamma$ on $S$ by $\sigma$ is 
\begin{itemize}
\item either a closed geodesic $\tilde{\gamma}$ of the same length, {\it i.e.}  $\ell(\gamma)=\ell(\tilde{\gamma})$,  if $\gamma$ consists of  the concatenation with its inverse of a geodesic arc from a cone singularity to another,
\item either a closed geodesic $\tilde{\gamma}$ with $\ell(\gamma)=2 \, \ell(\tilde{\gamma})$, if $\gamma$ bounds an odd number of cone points, 
\item or a pair of disjoint closed geodesic $\tilde{\gamma}_1$ and $\tilde{\gamma}_2$ with $\ell(\gamma)=\ell(\tilde{\gamma}_i)$ for $i=1,2$, if $\gamma$ bounds an even number $n\geq 4$ of cone points.
\end{itemize}
Here we say that $\gamma$ bounds an even (respectively odd) number of cone points if each connected  component of $S\setminus \gamma$ contains an even (respectively odd) number of cone points in its interior. 

So to prove our assertion we will show that the lift of a connected component of $S\setminus \PP$ consists either of a single $3$ or $4$-holed sphere of $\Sigma$ which is invariant by the hyperelliptic involution $\II$, or a pair of (interior disjoints) $3$-holed of $\Sigma$ which are image one from the other by $\II$. 

Consider a connected component $P$ of $S\setminus \PP$.  Either $P$ is the interior of a pair of pants bounded by three closed geodesics of $\{\gamma_1,\ldots,\gamma_{2g-1}\}$ , or $P$ is the interior of a cylinder bounded by two such closed geodesics with one cone singularity in its interior. In the first case, either each of the boundary geodesics  bounds an even number of cone singularities and then $P$ lift to a pair of disjoint $3$-holed sphere $\tilde{P}_1$ and $\tilde{P}_2$ such that $\II(\tilde{P}_1)=\tilde{P}_2$, or two of the boundary geodesics bounds an odd number of cone singularities and then $P$ lift to a $4$-holed sphere invariant by $\II$. In the case where  $P$ is the interior of a cylinder bounded by two closed geodesics  with one cone singularity in its interior, $P$ lift to a $3$-holed sphere invariant by $\II$.
\end{proof}

As the lift of a pants decomposition may only decompose parts of the surface into 4-holed spheres, we now need the following lemma.

\begin{lemma}
A 4-holed sphere with boundary curves of length at most $\ell$ contains a simple closed geodesic of length at most $2  \ell +12$.
\end{lemma}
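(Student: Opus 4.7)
The plan is to find a short orthogeodesic arc $c$ in the 4-holed sphere $S$ --- either between two distinct boundary components or a perpendicular loop based at a single boundary --- and combine it with the boundary curves to build the simple closed geodesic. Denote the boundary components $\alpha_1,\ldots,\alpha_4$, each of length at most $\ell$; by Gauss--Bonnet $\area(S)=4\pi$. Given such $c$ perpendicular at its endpoints to $\alpha_i$ and $\alpha_j$ (possibly $i=j$), concatenating $c$ with appropriate traversals of $\alpha_i$ and $\alpha_j$ yields a piecewise-geodesic closed curve freely homotopic to a simple closed geodesic $\gamma$ of length at most $\ell(\alpha_i)+\ell(\alpha_j)+2\ell(c)\leq 2\ell+2\ell(c)$. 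Thus it suffices to find such a $c$ of length at most $6$.

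To produce $c$, I would grow the half-neighborhoods $N_r(\alpha_i)=\{p\in S:d(p,\alpha_i)\leq r\}$ simultaneously from each boundary. Let $r^*$ be the smallest radius at which one of two events occurs: either two of the $N_r(\alpha_i)$ first meet (yielding an orthogeodesic of length $2r^*$ between two distinct boundaries) or one of them first fails to be an embedded half-collar (yielding an orthogeodesic loop of length $2r^*$ at a single boundary). Before $r=r^*$ the $N_r(\alpha_i)$ are pairwise disjoint embedded half-annuli, so $\sum_i\ell(\alpha_i)\sinh r\leq\area(S)=4\pi$, which gives $r^*\leq\arcsinh(\pi/\ell_{\min})$ where $\ell_{\min}=\min_i\ell(\alpha_i)$.

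In the main case $r^*\leq 3$, the construction above directly produces a simple closed geodesic of length at most $2\ell+12$. In the remaining case $r^*>3$, the area estimate forces $4\ell_{\min}\sinh 3<4\pi$, so $\ell_{\min}<\pi/\sinh 3<1/3$: some boundary is very short. In this regime I would use a ``thick part'' argument: the standard collars have total area bounded by $\sum_i\ell(\alpha_i)/\sinh(\ell(\alpha_i)/2)\leq 8<4\pi$, so their complement $S'$ is nonempty. Picking $p\in S'$ maximising the injectivity radius $\rho$, the embedded ball $B(p,\rho)$ has area $2\pi(\cosh\rho-1)\leq 4\pi$, yielding $\rho\leq\arccosh 3$. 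The collar lemma ensures that loops at $p$ freely homotopic to a boundary have length at least $2\arcsinh(\cosh(\ell(\alpha_i)/2))\geq 2\arcsinh 1$, so provided $p$ is chosen so that its shortest loop is interior, that loop is freely homotopic to an interior simple closed geodesic of length at most $2\arccosh 3<4<2\ell+12$.

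The main obstacle is this very-short-boundary case: one must verify that the shortest loop based at the chosen thick-part point $p$ represents an interior rather than a boundary-parallel class, and that the resulting minimal representative of its free homotopy class is simple. The first is a careful invocation of the collar lemma; the second is a standard property of the shortest closed geodesic on a hyperbolic surface. Away from this regime, the area estimate combined with the orthogeodesic construction delivers the bound $2\ell+12$ directly.
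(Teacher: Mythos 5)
Your main case is essentially the paper's argument: grow half-collars from the boundary components, use $\area(S)=4\pi$ to bound the radius at which a catastrophe occurs, and wrap the resulting short orthogeodesic arc around two boundary curves. The problem is the case you yourself flag at the end, and it is a genuine gap rather than a technicality. When $\ell_{\min}$ is very small the collar estimate $r^*\leq\arcsinh(\pi/\ell_{\min})$ gives no information, and your fallback does not close the hole: (i) the ball $B(p,\rho)$ may reach $\partial S$ before it develops a self-intersection, in which case the area formula $2\pi(\cosh\rho-1)$ fails and no geodesic loop is produced at all; (ii) even when a shortest loop at $p$ exists, you give no argument that its free homotopy class is essential and non-boundary-parallel --- and the short curves that a thin boundary component forces into existence (the boundaries of its collar) are precisely the boundary-parallel ones. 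What you would need is that a $4$-holed sphere with one boundary shorter than $1/3$ always carries an \emph{interior essential} simple closed geodesic of length less than $4$, uniformly in the other three boundary lengths, and nothing in your sketch establishes this.

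The paper sidesteps the bad case entirely with a normalization you are missing: the length expansion lemma from Section 2. First increase all boundary lengths to exactly $\ell$; this strictly increases the lengths of all interior geodesics, so a bound for the normalized surface implies the bound for the original. Second, if $\ell<\pi/\sinh 1$, increase all four boundary lengths to $\pi/\sinh 1$; interior geodesics again only lengthen, and the bound $2\pi/\sinh 1+4<12\leq 2\ell+12$ obtained for that surface transfers back. After these two reductions one may assume all four boundaries have common length $\ell\geq\pi/\sinh 1$, whence $4\ell\sinh r\leq 4\pi$ forces $r\leq 1$, the arc $c$ has length at most $2$, and the curve around two boundaries and $c$ has length at most $2\ell+4$. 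If you put this monotonicity reduction at the front of your proof, your main case is all you need and the short-boundary regime never arises.
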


\begin{proof}
Consider a four holed sphere with all boundary lengths less than $\ell$. As mentioned before, by increasing the length of all smaller boundary curves, one can increase the length of all interior curves, so it suffices to consider the case where all four boundary curves are of length $\ell$. Similarly, if $\ell$ is less than $\frac{\pi}{\sinh 1}$ then by simultaneously increasing all four lengths, one also can increase the length of all interior geodesics. So we can now suppose that we have a four holed sphere with all boundary geodesics of length $\ell\geq \frac{\pi}{\sinh 1}$. Consider a shortest path $c$ between any two boundary curves. We can bound the length of $c$ as follows: consider an $r$ neighborhood of the four boundary curves. As long as this neighborhood is embedded, it has area $4\ell \sinh(r)$. Thus 
$$
4 \ell \sinh r \leq 4 \pi
$$
as the full area of the 4-holed sphere is $4\pi$ and thus
$$
r\leq \arcsinh\frac{\pi}{\ell}.
$$
But as $\ell\geq \frac{\pi}{\sinh 1}$ we obtain $r\leq 1$. The geodesic in the homotopy class of the curve obtained by considering the $\varepsilon$ boundary of the two boundary geodesics and $c$ has length less than $2\ell + 4$. Now the inequality of the lemma holds for $\ell<\frac{\pi}{\sinh 1}$ the above argument show that the length is bounded by $2\pi/\sinh 1 +4 <12$.

\end{proof}
 
\begin{theorem}\label{th:bershypsurf}
Let $g\geq 2$.  Then
$$
\BB^{hyp}_g  < 40 \ln 6\pi \sqrt{2(g-1)} + 12\left(< 51\sqrt{4\pi(g-1)}\, \right).
$$
\end{theorem}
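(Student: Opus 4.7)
The plan is to descend to the quotient sphere under the hyperelliptic involution and pull back a short pants decomposition. Let $\tilde{S}$ be a hyperelliptic surface of genus $g \geq 2$, and let $S = \tilde{S}/\sigma$ denote its quotient under the hyperelliptic involution $\sigma$. Then $S$ is a hyperbolic sphere with exactly $2g+2$ cone points of angle $\pi$, one coming from each fixed point of $\sigma$.

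First, I would apply Theorem~\ref{th:berssphereconic} to $S$ with $n = 2g+2$ (so that $n-4 = 2(g-1) \geq 2$). This produces a pants decomposition $\PP$ of $S$ satisfying
$$\ell(\PP) \leq 10 \ln 6\pi \, \sqrt{2(g-1)}.$$
Next, invoking the first lemma of this section, I would lift $\PP$ through $\sigma$ to obtain a multicurve $\mu$ on $\tilde{S}$ whose complementary region is a disjoint union of $3$- and $4$-holed spheres, and whose length satisfies $\ell(\mu) \leq 2\,\ell(\PP)$.

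The multicurve $\mu$ is not in general a pants decomposition, because of the $4$-holed sphere pieces of $\tilde{S}\setminus\mu$ (those arising from pairs of pants in $S$ two of whose boundary components bound an odd number of cone points). To complete it, I would add one simple closed geodesic inside each such $4$-holed sphere component. Since each $4$-holed sphere has all boundary curves of length at most $\ell(\mu) \leq 2\,\ell(\PP)$, the second lemma of this section provides an interior simple closed geodesic of length at most $2\cdot 2\ell(\PP) + 12 = 4\ell(\PP) + 12$. Taking $\tilde{\PP}$ to be $\mu$ together with these added interior geodesics yields a genuine pants decomposition of $\tilde{S}$ with
$$\ell(\tilde{\PP}) \leq 4\,\ell(\PP) + 12 \leq 40 \ln 6\pi \, \sqrt{2(g-1)} + 12,$$
which is the desired bound.

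The serious work (Theorem~\ref{th:berssphereconic} on cone spheres, the analysis of lifts of pants decompositions through $\sigma$, and the short-geodesic estimate on $4$-holed spheres) has already been carried out earlier in the section, so the only remaining task is to assemble these three ingredients and track constants. There is no deep obstacle; the main point of care is that the lift factor of $2$ from the first lemma compounds with the $2\ell + 12$ bound from the second lemma to give precisely the coefficient $40\ln 6\pi$ and additive constant $12$ appearing in the statement, and that the application of Theorem~\ref{th:berssphereconic} requires $n \geq 5$, which is automatic from $g \geq 2$.
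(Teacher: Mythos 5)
Your argument is exactly the paper's: pass to the quotient sphere with $2g+2$ cone points of angle $\pi$, apply Theorem~\ref{th:berssphereconic}, lift the resulting pants decomposition to a multicurve of at most twice the length via the first lemma of the section, and complete the $4$-holed sphere components using the $2\ell+12$ estimate, yielding $4\ell(\PP)+12 \leq 40\ln 6\pi\sqrt{2(g-1)}+12$. The constants are tracked correctly and nothing is missing.
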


\begin{proof}
The quotient of a hyperelliptic surface $\tilde{S}$ by its hyperelliptic involution is a hyperbolic sphere $S$ with $2g+2$ cone points of angle $\pi$. 
By theorem \ref{th:berssphereconic} there exists a pants decomposition $\PP=\{\gamma_1,\ldots,\gamma_{2g-1}\}$ of $S$ by $2g-1$ closed geodesics of length bounded by $10 \ln 6\pi \,  \sqrt{2(g-1)}$. Now by the first of the two previous lemma, by the hyperellitpic involution this lifts to a multicurve $\mu$ on $\tilde{S}$ of length at most $20 \ln 6\pi \,  \sqrt{2(g-1)}$. $S\setminus \mu$ consists of a set of 3 and 4-holed spheres, and on each of the 4-holed spheres, by the previous lemma one can find a short geodesic to complete $\mu$ into a pants decomposition of $\tilde{S}$ which satisfies the above bound.
\end{proof}

\begin{remark} Using the same methods as in the hyperelliptic case, one finds a similar bounds for Bers' constants on so-called $M$-maximal surfaces. $M$-maximal surfaces are closed genus $g$ surfaces admitting an orientation reversing involution with the fixed point set consisting of $g+1$ disjoint simple closed geodesics. The quotient of such a surface by its involution is a sphere with $g+1$ boundary geodesics, and so it's not too difficult to see how to use the same method as for hyperelliptic surfaces.
\end{remark}

 \section{Bers' constants for hyperbolic spheres with boundary}\label{sec:boundary}
 
In this section we derive from our approach an upperbound on Bers' constants for hyperbolic spheres with geodesic boundary. \\

We begin by finding a bound that follows directly from theorem \ref{th:berspuncturedsphere} and lemma \ref{lem:proj}. This lemma will essentially be used to start up the induction.

\begin{lemma}\label{lem:roughboundary}
Let $S$ be a hyperbolic sphere with $n\geq 4$ geodesic boundary components of length at most $\ell$. Then the length of a shortest pants decomposition of $S$ satisfies the following upper bound.

$$
\BB(S) \leq 60\sqrt{\pi (n-1)} + n \ell.
$$
\end{lemma}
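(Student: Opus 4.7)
The plan is to reduce the statement to the case of a punctured sphere, for which we already have Theorem~\ref{th:berspuncturedsphere}, and then use Lemma~\ref{lem:proj} to include the boundary curves of $S$ in the pants decomposition. More precisely, to each boundary geodesic $\gamma_i$ of $S$, I would glue a hyperbolic pair of pants $P_i$ with two cusps and one boundary geodesic of length equal to $\ell(\gamma_i)$. The resulting surface $\tilde S$ is a hyperbolic sphere with $2n$ cusps, so by Theorem~\ref{th:berspuncturedsphere} it admits a pants decomposition $\tilde \PP_0$ with
$$
\ell(\tilde \PP_0) \leq 30\sqrt{2\pi(2n-2)} = 60\sqrt{\pi(n-1)}.
$$

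Next, I would iteratively modify the pants decomposition to include each of the $n$ original boundary curves $\gamma_1,\ldots,\gamma_n$. At step $i$, the geodesic $\gamma_i$ is an interior simple closed geodesic of the subsurface $S_{i-1}$ of $\tilde S$ obtained by removing the pants $P_1,\ldots,P_{i-1}$ (this is a hyperbolic sphere with cusps and boundary geodesics, and $\tilde \PP_{i-1}$ restricted to $S_{i-1}$ is a pants decomposition of $S_{i-1}$ containing no curves in the glued pants $P_j$ for $j < i$ by the fact that such pants have no non-peripheral simple closed geodesics). Applying Lemma~\ref{lem:proj} to $S_{i-1}$ with the geodesic $\gamma_i$ yields a pants decomposition of $S_{i-1}$ containing $\gamma_i$ whose length is at most $\ell(\tilde \PP_{i-1}) + \ell(\gamma_i) \leq \ell(\tilde \PP_{i-1}) + \ell$. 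Combining this new pants decomposition with $\gamma_1,\ldots,\gamma_{i-1}$ (and the empty pants decompositions of the already cut-off $P_j$'s) gives a pants decomposition $\tilde \PP_i$ of $\tilde S$ containing $\gamma_1,\ldots,\gamma_i$ with
$$
\ell(\tilde \PP_i) \leq \ell(\tilde \PP_0) + i\,\ell.
$$

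After all $n$ iterations, $\tilde \PP_n$ is a pants decomposition of $\tilde S$ containing every $\gamma_i$ of length at most $60\sqrt{\pi(n-1)} + n\ell$. Restricting to $S$ (the curves in each $P_i$ being empty, and the $\gamma_i$'s becoming boundary components of $S$) produces a pants decomposition of $S$ with the desired length bound. The main potential obstacle is ensuring that the iterative projections actually preserve the previously inserted boundary curves; this is handled by carrying out each projection step inside the subsurface $S_{i-1}$ rather than the whole of $\tilde S$, which is precisely what allows the length increase to accumulate linearly and no more.
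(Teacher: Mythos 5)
Your proposal is correct and follows essentially the same route as the paper: glue a two-cusped pair of pants to each boundary geodesic to obtain a sphere with $2n$ cusps, apply Theorem~\ref{th:berspuncturedsphere} to get a pants decomposition of length at most $30\sqrt{2\pi(2n-2)}=60\sqrt{\pi(n-1)}$, and then apply Lemma~\ref{lem:proj} iteratively to the $n$ original boundary geodesics, adding at most $\ell$ per step. Your extra care in performing each projection inside the subsurface $S_{i-1}$ so as not to disturb the previously inserted curves is a point the paper leaves implicit, and it is handled correctly.
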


\begin{proof}
As explained previously, we can assume that all the boundary geodesics are of length $\ell$. We begin by gluing a pair of pants with two cusps and a geodesic of length $\ell$ to each boundary geodesic. Denote $\tilde{S}$ the sphere with $2n$ cusps thus obtained. By theorem \ref{th:berspuncturedsphere}, $\tilde{S}$ has a pants decomposition whose is at most
$$
30\sqrt{2\pi (2n-2)}.
$$

Now by lemma \ref{lem:proj}, we can find a new pants decomposition containing any given geodesic whose length is bounded by the sum of the original pants decomposition and the length of the curve. Applying this lemma iteratively with the original boundary geodesics one obtains the desired bound.
\end{proof}

\begin{lemma}\label{lem:cutspherewithboundary}
Let $S$ be a hyperbolic sphere with $n\geq 4$ (connected) boundary components of length at most $\ell$. 
Consider the set $\FF'$ of simple closed closed geodesics $\delta$ such that each connected component of $S\setminus \delta$ contains at least $\frac{n}{4}$ boundary components. Then the following inequality holds:
$$
\min_{\delta\in \FF} \ell(\delta) \leq 4 \sqrt{2\pi (n_2-2)+n_2{\ell^2 \over 2\pi}}
$$
where $n_1\leq n_2$ are the number of boundary components lying in each connected component of $S\setminus \delta$.
\end{lemma}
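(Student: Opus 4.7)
My plan is to follow the argument of Lemma~\ref{lem:geocheeger}, adapted from cone points to actual geodesic boundary components. Let $\gamma \in \FF'$ realize $\min_{\delta \in \FF'} \ell(\delta)$; this minimum is achieved since only finitely many closed geodesics on $S$ have length under any given bound. Cut $S$ open along $\gamma$ to produce two subsurfaces $S_1, S_2$ with respectively $n_1 \leq n_2$ original boundary components of $S$. I will estimate $\ell(\gamma)$ by combining a distance bound inside $S_2$ with Besicovitch's lemma applied to a suitable capped-off version of $S_2$.

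The first step is the distance estimate: for any geodesic arc $c \subset S_2$ with endpoints $p, q \in \gamma$ dividing $\gamma$ into subarcs $\gamma', \gamma''$, I claim $\ell(c) \geq \min\{\ell(\gamma'), \ell(\gamma'')\}$. The two concatenations $c \cup \gamma'$ and $c \cup \gamma''$ are simple closed curves in $S$. Neither is null-homotopic (hyperbolic geodesic bigons do not exist, and $c$ is not a subarc of $\gamma$), so each is either peripheral---homotopic to one of the original boundary components $\beta_i$ of $S$---or has a non-peripheral geodesic representative. A combinatorial argument using $n/4 \leq n_1 \leq n_2 \leq 3n/4$ rules out the degenerate possibility that both concatenations are peripheral (this would force $n_2 \leq 2$, contradicted by $n \geq 4$) and shows that at least one of the two geodesic representatives lies in $\FF'$. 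Minimality of $\gamma$ then gives either $\ell(c) + \ell(\gamma') \geq \ell(\gamma)$ or $\ell(c) + \ell(\gamma'') \geq \ell(\gamma)$, which is the claim. Partitioning $\gamma$ into four equal arcs $\gamma_1, \ldots, \gamma_4$ of length $\ell(\gamma)/4$, we deduce $d_{S_2}(\gamma_k, \gamma_{k+2}) \geq \ell(\gamma)/4$ for $k = 1, 2$.

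To apply Besicovitch's lemma~\ref{lem:alm}, which requires a Riemannian disk, I cap off each of the $n_2$ boundary components $\beta_i$ of $S$ lying in $S_2$ by a round spherical hemisphere of perimeter $\ell(\beta_i) \leq \ell$. The resulting surface $\hat{S}_2$ is a Riemannian disk with $\partial \hat{S}_2 = \gamma$, and each hemisphere has area $\ell(\beta_i)^2/(2\pi) \leq \ell^2/(2\pi)$, giving $\area(\hat{S}_2) \leq \area(S_2) + n_2 \ell^2/(2\pi)$. The key observation is that on a round hemisphere, the shortest path between two equator points runs entirely along the equator, so capping preserves boundary-to-boundary distances: $d_{\hat{S}_2}(\gamma_k, \gamma_{k+2}) \geq \ell(\gamma)/4$. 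Besicovitch then yields $(\ell(\gamma)/4)^2 \leq \area(\hat{S}_2)$, and combining with the area computation $\area(S_2) = 2\pi(n_2 - 2)$ (following the convention of Lemma~\ref{lem:geocheeger}) gives the stated inequality. The main obstacle I anticipate is the combinatorial step ruling out both concatenations being peripheral: one must carefully track the distribution of the $\beta_i$'s between the two sides of $c \cup \gamma'$ and $c \cup \gamma''$ to guarantee that at least one representative genuinely separates $S$ in a way that places it in $\FF'$.
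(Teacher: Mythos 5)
Your proposal follows the paper's proof essentially step for step: the same minimizing curve $\gamma\in\FF'$, the same distance estimate $\ell(c)\geq\min\{\ell(\gamma'),\ell(\gamma'')\}$ obtained from minimality, the same partition of $\gamma$ into four equal arcs, the same capping of the $n_2$ boundary components by round hemispheres of perimeter at most $\ell$, and the same application of Besicovitch's lemma to the resulting disk. The two points you flag for extra care (why one of the two concatenations lands in $\FF'$, and why the hemispherical caps do not create shortcuts between $\gamma_k$ and $\gamma_{k+2}$) are exactly the details the paper leaves implicit, so your argument is correct and matches the paper's.
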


\begin{proof}
We proceed as in the punctured case. Consider $\gamma\in \FF'$ such that  $\ell(\gamma)=\min_{\delta\in \FF} \ell(\delta)$.
Now consider the two connected components $S_1,S_2$ of $S\setminus \gamma$ with $n_1$ and $n_2$ boundary components respectively. We still denote $\gamma$ the resulting boundary geodesic on both $S_1$ and $S_2$ from cutting along $\gamma$. On $S_2$, consider any two distinct points $p,q$ on $\gamma$ and consider any geodesic path $c$ between them which is not a subpath of $\gamma$. Denote $\gamma'$ and $\gamma''$ the two subpaths of $\gamma$ separated by $p$ and $q$. Note that either the concatenation of $c$ with $\gamma'$ or the concatenation of $c$ with $\gamma''$, is a simple closed curve whose simple closed geodesic representative lies in $\FF'$. Thus the following inequality holds:

$$
\ell(c) \geq \min\{\ell(\gamma'),\ell(\gamma'')\}.
$$

Now separate $\gamma$ into four arcs of length $\frac{\ell(\gamma)}{4}$, say $\gamma_k$, $k\in \{1,2,3,4\}$ (in cyclic ordering which follows a given orientation of $\gamma$). Because of the above inequality we have 

$$
d_{S_2}(\gamma_k,\gamma_{k+2}) \geq \frac{\ell(\gamma)}{4}
$$

\noindent for $k=1,2$. Now take $S_2$ and glue along each connected component $\eta$ of the boundary (except $\gamma$) a round hemisphere of radius $\ell(\eta) \over {2\pi}$. We denote by $\tilde{S}_2$ the disk thus obtained. As $\ell(\eta)\leq \ell$, we can easily bound its area by 
$$
\area(\tilde{S}_2)\leq\area(S_2)+n_2\, {\ell^2 \over 2\pi}.
$$
By lemma \ref{lem:alm}, we have that

$$
\area(\tilde{S}_2) \geq  \left(\frac{\ell(\gamma)}{4}\right)^2.
$$

As $S_2$ has $n_2+1$ geodesic boundary components, we have $\area(S_2) = 2\pi (n_2-1)$ and one obtains our inequality. \end{proof}

\begin{theorem}\label{thm:boundarysphere}
The following inequality holds:
$$
\BB_{0,n, \ell} \leq (30\sqrt{2}+2\sqrt{\pi}) \, \sqrt{2\pi(n-2)}\sqrt{\left({\ell \over 2\pi}\right)^2+1} < 46\, \sqrt{2\pi(n-2)}\sqrt{\left({\ell \over 2\pi}\right)^2+1}.
$$
\end{theorem}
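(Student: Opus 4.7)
The plan is to prove Theorem \ref{thm:boundarysphere} by induction on $n$, closely paralleling the proofs of Theorems \ref{th:berspuncturedsphere} and \ref{th:berssphereconic}, but now using the refined cut estimate of Lemma \ref{lem:cutspherewithboundary}, which accounts for the boundary contribution through the extra area term $n\ell^2/(2\pi)$. Throughout, abbreviate $C := 30\sqrt{2}+2\sqrt{\pi}$ and $L(\ell) := \sqrt{(\ell/2\pi)^2+1}$, so that the target inequality reads $\BB_{0,n,\ell}\leq C\sqrt{2\pi(n-2)}\,L(\ell)$.

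For the inductive step, given $S$ with $n$ boundary geodesics of length at most $\ell$, Lemma \ref{lem:cutspherewithboundary} supplies a simple closed geodesic $\gamma$ such that each side of $S\setminus\gamma$ contains at least $n/4$ boundaries of $S$; writing $n_2\leq 3n/4$ for the larger side,
\[
\ell(\gamma) \;\leq\; 4\sqrt{2\pi(n_2-2)+n_2\ell^2/(2\pi)} \;\leq\; 4\sqrt{2\pi n_2}\,L(\ell).
\]
I then cap the $\gamma$-boundary of each piece $S_i$ of $S\setminus\gamma$ with a hyperbolic pair of pants whose other two boundary geodesics have length exactly $\ell$ (legitimate since hyperbolic pants exist for any admissible triple of positive lengths). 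The resulting surface $\tilde{S}_i$ is a hyperbolic sphere with $n_i+2$ boundary components, all of length at most $\ell$, and for $n$ large enough one has $n_i+2<n$, so the induction hypothesis yields a pants decomposition of $\tilde{S}_i$ of length at most $C\sqrt{2\pi n_i}\,L(\ell)$. Lemma \ref{lem:proj} then converts this into a pants decomposition of $\tilde{S}_i$ containing $\gamma$, of length at most $C\sqrt{2\pi n_i}\,L(\ell)+\ell(\gamma)$. Restricting each to $S_i$ (by discarding the curves lying inside the glued pants) and pasting the two pieces along $\gamma$ delivers a pants decomposition of $S$ of length bounded by
\[
(C+4)\sqrt{2\pi n_2}\,L(\ell) \;\leq\; (C+4)\sqrt{3/4}\,\sqrt{2\pi n}\,L(\ell),
\]
and the induction closes provided $(C+4)\sqrt{3n/4}\leq C\sqrt{n-2}$, equivalently $n\bigl[C^2-24C-48\bigr]\geq 8C^2$; since $C = 30\sqrt{2}+2\sqrt{\pi}$ comfortably exceeds the critical value $12+8\sqrt{3}\approx 25.86$, this fixes an explicit threshold $n_0$ above which the inductive step goes through.

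For the base case $n<n_0$, Lemma \ref{lem:roughboundary} provides $\BB(S)\leq 60\sqrt{\pi(n-1)}+n\ell$, and the two summands of $C$ are calibrated precisely to absorb the two terms of this rough bound: the piece $30\sqrt{2}\sqrt{2\pi(n-2)}\,L(\ell) = 60\sqrt{\pi(n-2)}\,L(\ell)$ dominates $60\sqrt{\pi(n-1)}$ (the small mismatch $\sqrt{n-1}-\sqrt{n-2}$ being covered by leftover slack from the other summand), while $2\sqrt{\pi}\sqrt{2\pi(n-2)}\,L(\ell)\geq \sqrt{2(n-2)/\pi}\,\ell$ (using $L(\ell)\geq \ell/(2\pi)$) dominates $n\ell$ as long as $n$ stays bounded. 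For the very smallest values of $n$ (roughly $n\leq 7$), where the rough bound narrowly misses the target at $\ell=0$, one falls back on a direct recursive application of Lemma \ref{lem:cutspherewithboundary}: each cut isolates one pants curve at a time, and since the recursion depth is a small constant the accumulated length is easily checked against $C\sqrt{2\pi(n-2)}\,L(\ell)$ by elementary algebra.

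The main obstacle is not the geometric content, which follows transparently from Lemma \ref{lem:cutspherewithboundary}, but the bookkeeping required to pin down $n_0$ and to verify that the combined base cases cover every $4\leq n<n_0$ with the announced constant. The shape $C = 30\sqrt{2}+2\sqrt{\pi}$ is dictated precisely by this matching, with $30\sqrt{2}$ inherited from the puncture bound of Theorem \ref{th:berspuncturedsphere} (via the $2n$-cusp surface produced in the proof of Lemma \ref{lem:roughboundary}) and $2\sqrt{\pi}$ tailored to absorb the $n\ell$-contribution coming from reinstating the boundary geodesics through Lemma \ref{lem:proj}.
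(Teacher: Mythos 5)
Your inductive step is the same as the paper's (cut with Lemma \ref{lem:cutspherewithboundary}, cap the two pieces, apply the induction hypothesis, reinstate $\gamma$ with Lemma \ref{lem:proj}, reglue), and it is correct as far as it goes; the algebra $n\,[C^2-24C-48]\geq 8C^2$ with critical value $12+8\sqrt{3}$ checks out. But your bookkeeping is cruder than the paper's: by discarding the $-2$'s and bounding everything by $\sqrt{2\pi n_2}$ you only close the induction for $n\geq 18$, whereas the paper keeps $\sqrt{2\pi(n_2-2)}$ throughout, uses $n_2/(n_2-2)\leq 3$ and $n_2-2\leq \tfrac34(n-\tfrac83)$, and needs only $C\geq f(n)=\frac{4\sqrt3\sqrt{3n-8}}{2\sqrt{n-2}-\sqrt{3n-8}}$, which holds for all $n\geq 5$ since $f$ is decreasing and $f(5)\approx 22.4<C$. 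That sharper step is not cosmetic: it reduces the base case to $n=4$ alone, while your version leaves $4\leq n\leq 17$ to be verified by hand.

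This is where your argument has a genuine gap. The ``calibration'' you describe is numerically false. The condition for $60\sqrt{\pi(n-1)}+n\ell\leq C\sqrt{2\pi(n-2)}\sqrt{(\ell/2\pi)^2+1}$ to hold for \emph{all} $\ell\geq 0$ is $(A/D)^2+(B/E)^2\leq 1$ with $A=60\sqrt{\pi(n-1)}$, $D=C\sqrt{2\pi(n-2)}$, $B=n$, $E=C\sqrt{(n-2)/(2\pi)}$, i.e.\ $1800(n-1)+2\pi n^2\leq C^2(n-2)$; with $C=30\sqrt2+2\sqrt\pi$ this fails for $4\leq n\leq 9$ (not just $n\leq 7$), so Lemma \ref{lem:roughboundary} does not cover those cases. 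Moreover the specific claim that $2\sqrt{\pi}\sqrt{2\pi(n-2)}\,L(\ell)\geq\sqrt{2(n-2)}\,\ell$ ``dominates $n\ell$'' is never true, since $\sqrt{2(n-2)}<n$ for every $n$; and slack cannot simply be shuttled between the constant term and the $\ell$-slope, because $\sqrt{D^2+E^2\ell^2}$ is not the sum of its two extremes. Your fallback for small $n$ (``isolate one pants curve at a time with Lemma \ref{lem:cutspherewithboundary}'') is not a proof: that lemma only produces curves with at least $n/4$ boundary components on each side, so it cannot peel off arbitrary pants curves, and no accumulated estimate is given. (To be fair, the paper's own $n=4$ base case via Lemma \ref{lem:roughboundary} suffers from the same numerical deficit at $\ell=0$ — $60\sqrt{3\pi}\approx 184$ versus $C\sqrt{4\pi}\approx 163$ — so the base case genuinely requires more care than either you or the authors give it; for $n=4$ the four-holed-sphere lemma of Section \ref{sec:hyper} would do.) To repair your write-up, either tighten the inductive step as the paper does so that only $n=4$ remains, or give an explicit, verified bound for each $n$ in your base range.
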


\begin{proof}

We shall prove the theorem by induction. Note that by lemma \ref{lem:roughboundary}  the theorem is true for $n = 4$. \\

Now take $n\geq 5$ and $S$ a hyperbolic sphere with $n$ boundary components of length at most $\ell$. By lemma \ref{lem:cutspherewithboundary}, one can find a ``short" geodesic $\gamma$ such that both connected components of $S\setminus \gamma$ contain at least $\frac{n}{4}$ boundary components. Let $n_1\leq n_2$ with $n_1+n_2$ be the number of boundary components different from $\gamma$ of $S_1$ and $S_2$, the two connected components of the complementary region on $S$ to $\gamma$. The bound on the length of $\gamma$ from lemma \ref{lem:cutspherewithboundary} is precisely 

$$
\ell(\gamma) \leq 4 \sqrt{2\pi (n_2-2)+n_2{\ell^2 \over 2\pi}}.
$$

 Consider two surfaces $\tilde{S}_1$ and $\tilde{S}_2$ obtained by pasting a pair of pants (with two cusps and one simple closed geodesic with the same length as $\gamma$) onto $S_1$ and $S_2$. Note that we do not impose anything on the twist parameter of the gluing. The surfaces $\tilde{S}_1$ and $\tilde{S}_2$ are hyperbolic spheres with respectively $n_1+2$ and $n_2+2$ boundary components of length at most $\ell$.\\
 
The idea is the same as used in Theorem \ref{th:berspuncturedsphere}.
By induction, there is a pants decomposition $\PP_2$ of $\tilde{S}_2$ with 
$$
\ell(\PP_2) \leq C \, \sqrt{2\pi(n_2-2)}\sqrt{\left({\ell \over 2\pi}\right)^2+1}.
$$
In order to simplify the equations in the sequel, we denote $C$ the constant $30\sqrt{2}+2\sqrt{\pi}$. Using lemma \ref{lem:proj}, we can find a pants decomposition $\PP_2'$ containing $\gamma$ this time and such that 

$$
\ell(\PP_2') \leq \ell(\PP_2) +\ell(\gamma)
$$

thus

\begin{eqnarray}
\nonumber \ell(\PP_2') & \leq & C \, \sqrt{2\pi(n_2-2)}\sqrt{\left({\ell \over 2\pi}\right)^2+1} + 4 \sqrt{2\pi (n_2-2)+n_2{\ell^2 \over 2\pi}}\\
\nonumber &\leq& C \, \sqrt{2\pi(n_2-2)}\sqrt{\left({\ell \over 2\pi}\right)^2+1} + 4 \sqrt{2\pi (n_2-2)}\sqrt{1+{n_2 \over n_2-2}\left({\ell^2 \over 2\pi}\right)^2}\\
\nonumber &\leq& (C+4\sqrt{n_2 \over n_2-2}) \, \sqrt{2\pi(n_2-2)}\sqrt{\left({\ell \over 2\pi}\right)^2+1}\\
\nonumber &\leq& (C+4\sqrt{3}) \, \sqrt{2\pi(n_2-2)}\sqrt{\left({\ell \over 2\pi}\right)^2+1}\\
\nonumber &\leq&{\sqrt{3} \over 2}(C+4\sqrt{3})\sqrt{2\pi(n-{8\over 3})}\sqrt{\left({\ell \over 2\pi}\right)^2+1}.\\
\nonumber
\end{eqnarray}

This above quantity is indeed smaller than 

$$
C \, \sqrt{2\pi(n-2)}\sqrt{\left({\ell \over 2\pi}\right)^2+1}.
$$
if for any $n\geq 5$ 
$$
C\geq f(n):={4\sqrt{3}\sqrt{3n -8} \over 2\sqrt{n-2}-\sqrt{3n-8}}.
$$
But the function $f$ is easily checked to be decreasing and as $f(5)<C$ this ends the proof.

\end{proof}

\section{The hairy sphere and other examples} \label{sec:ex}

In this section we show how to construct a hyperbolic sphere with boundary which will be used to show that our order of growth of Bers' constants for punctured spheres is optimal and, as a byproduct, we get a better lower bound on Bers' constants for closed surfaces than the one given by the so-called ``hairy torus" example given in \cite{bubook}.\\

The basic construction is similar to the one for the hairy torus examples but differs in one essential way. The fact that the hairy torus examples have large Bers' constants is immediate once one remarks that, because a hairy torus has genus $1$, at least one curve in a pants decomposition must cut genus. Then one calculates the minimum length of a genus cutting curve and the lower bound follows. In the case of a punctured sphere (or a sphere with boundary components), there is no genus to cut. The following topological lemma is the trick to replace the genus cutting argument.

\begin{lemma}\label{lem:top} Let $S$ be a (topological) sphere with $n>5$ punctures. Let $\alpha$ and $\beta$ be disjoint simple essential closed curves such that each bounds a pair of pants. Then a pants decomposition of $S$ contains a curve $\gamma$ such that one of the following hold:
\begin{enumerate}[i)]
\item\label{lem:1} $\gamma$ is either equal to $\alpha$ or $\beta$,\\
\item\label{lem:2}  $\gamma$ crosses both $\alpha$ and $\beta$,\\
\item\label{lem:3}  $\gamma$ separates $\alpha$ from $\beta$.
\end{enumerate}
\end{lemma}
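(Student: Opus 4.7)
I encode $\PP$ by its decorated dual tree $T$: the $n$ punctures of $S$ are the leaves, the $n-2$ pants are trivalent internal vertices, and the $n-3$ curves of $\PP$ correspond to the internal (non-leaf) edges. Removing an internal edge $e$ partitions the leaves of $T$ into the two collections of punctures separated in $S$ by the associated curve $\gamma_e\in\PP$.

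Because $\alpha$ and $\beta$ each bound a pair of pants, each encloses exactly two cusps; write $\{p_1,p_2\}$ and $\{p_3,p_4\}$ for the enclosed pairs. The standard fact that two disjoint essential simple closed curves on a punctured sphere induce nested (or disjoint) partitions of the cusps, together with $|T_\alpha|=|T_\beta|=2$ and $n>5$, leaves exactly two possibilities: either $\{p_1,p_2\}\cap\{p_3,p_4\}=\emptyset$ (the generic case $\alpha\not\simeq\beta$) or $\{p_1,p_2\}=\{p_3,p_4\}$ (the degenerate case $\alpha\simeq\beta$).

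The next step is to set up a tree dictionary. For each internal edge $e$: (a) $\gamma_e$ is isotopic to $\alpha$ iff the deletion of $e$ isolates exactly the set $\{p_1,p_2\}$, equivalently some pant of $\PP$ has $p_1$ and $p_2$ as two of its three boundaries; (b) $\gamma_e$ crosses $\alpha$ iff $e$ lies on the unique path $P_{12}$ from $p_1$ to $p_2$ in $T$ (and analogously for $\beta$ with $P_{34}$); (c) $\gamma_e$ separates $\alpha$ from $\beta$ iff $e$ lies outside $P_{12}\cup P_{34}$ and the removal of $e$ places $\{p_1,p_2\}$ and $\{p_3,p_4\}$ in different components. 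The delicate point is (b): it uses that any pants curve encloses at least two cusps on each side, so the only nested configurations with the pair $\{p_1,p_2\}$ correspond to $e$ not separating $p_1$ from $p_2$.

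In the degenerate case, isotopic disjoint curves have the same geometric intersection number with every third curve, so crossing $\alpha$ is equivalent to crossing $\beta$. If (i) fails, then no internal vertex is adjacent to both $p_1$ and $p_2$, so $P_{12}$ has length at least $3$ and thus contains an internal-to-internal edge $e$; the curve $\gamma_e$ crosses $\alpha$ and therefore also $\beta$, giving (ii).

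In the generic case I apply the standard quartet analysis to the four distinct leaves $\{p_1,p_2,p_3,p_4\}$ in the trivalent tree $T$: their minimal spanning subtree is H-shaped with two branch points that are necessarily distinct (trivalence of $T$ forbids a degree-$4$ vertex), so the quartet topology is exactly one of $(12|34)$, $(13|24)$, or $(14|23)$. In the topology $(12|34)$ the bridge between the two branch points is a nonempty path of internal-to-internal edges, and every such edge gives a pants curve disjoint from $\alpha$ and $\beta$ that separates them, providing (iii). In either of the two remaining topologies the paths $P_{12}$ and $P_{34}$ share a nonempty bridge segment whose edges yield pants curves crossing both $\alpha$ and $\beta$, providing (ii). The main obstacle I anticipate is establishing the dictionary (b) precisely; once it is in place, the quartet analysis dispatches all cases uniformly.
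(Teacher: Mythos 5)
There is a genuine gap, and it sits exactly where you flagged it: the direction of your dictionary item (b) that you actually need is false. The dual tree of $\PP$ records only the partition of the punctures induced by each curve of $\PP$, and $\alpha$, $\beta$ enter your argument only through their puncture pairs $\{p_1,p_2\}$ and $\{p_3,p_4\}$. But partition data does not control geometric intersection: a curve $\gamma_e\in\PP$ with $e\notin P_{12}$ (so that $p_1,p_2$ lie on the same side of $\gamma_e$) can perfectly well cross $\alpha$, since two curves inducing nested partitions of the punctures need not be disjoint. Concretely, take the linear pants decomposition of the $6$-punctured sphere with curves $\gamma_1,\gamma_2,\gamma_3$ enclosing $\{p_1,p_2\}$, $\{p_1,p_2,p_3\}$, $\{p_1,\dots,p_4\}$, and set $\alpha=\phi(\gamma_1)$, $\beta=\phi(\gamma_3)$ for a suitable Dehn twist $\phi$ whose core crosses all three: then every curve of $\PP$ crosses both $\alpha$ and $\beta$, yet your quartet reads off the topology $(12|34)$ and asserts that the bridge curves are disjoint from $\alpha$ and $\beta$ and separate them. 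The same defect infects item (a): a pair of pants of $\PP$ can have cusps $p_1,p_2$ while its third boundary curve is not isotopic to $\alpha$ (it then necessarily crosses $\alpha$), so the failure of (i) does not force $P_{12}$ to have length at least $3$.

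The sub-cases that survive are precisely those using only the true direction of (b): in the topologies $(13|24)$ and $(14|23)$ the bridge edges lie on both $P_{12}$ and $P_{34}$, so their curves separate $p_1$ from $p_2$ and $p_3$ from $p_4$ and hence genuinely cross both $\alpha$ and $\beta$. The topology $(12|34)$ is the hard case, and it is the one the paper's entire proof is devoted to: assuming (i) and (ii) fail, one takes the curves of $\PP$ crossing $\alpha$, considers their arcs in the component of $S\setminus\alpha$ containing $\beta$, selects the arcs that are outermost toward $\beta$, and surgers them with subarcs of $\alpha$ to produce an essential curve disjoint from all of $\PP$; such a curve must itself belong to $\PP$, and it separates $\alpha$ from $\beta$. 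Some argument of this kind, working with the actual arcs of $\PP$ relative to $\alpha$ rather than with puncture partitions, is unavoidable here: the tree together with the two puncture pairs simply does not determine which of the three alternatives holds.
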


\begin{proof}
Let $\mathcal P$ be a pants decomposition of $S$. Suppose that neither (\ref{lem:1}) nor (\ref{lem:2}) hold for $\mathcal P$. Consider the set of curves $\gamma_1,\hdots,\gamma_m\subset {\mathcal P}$ that cross $\alpha$. Note that this set is non empty otherwise $\alpha\subset {\mathcal P}$. Denote $S_\alpha$ the connected component of $S\setminus \alpha$ containing $\beta$. Further consider the arcs, say $c_1,\hdots, c_l$ of $\gamma_1,\hdots,\gamma_k$ on $S_\alpha$.\\

Because every essential curve on a sphere is separating, one can organize the arcs into groups as follows. Note that any arc is separating. For two distinct arcs, say $c$ and $\tilde{c}$, we say that $c$ {\it includes} $\tilde{c}$ if $c$ separates $\tilde{c}$ from $\beta$. Now note that if $c$ includes $\tilde{c}$ and $\tilde{c}$ includes $\hat{c}$ then $c$ includes $\hat{c}$. Denote the set of arcs which are not included by any other arcs by $c_1,\hdots,c_k$ and such that, for a given orientation of $\alpha$, the endpoints of $c_i$ follow those of $c_{i+1}$ for all $i$ from $1$ to $k-1$. Note that it is a priori possible that $k=l$ but in any case $k>0$.\\

Now consider the curve $\gamma$ homotopic to the closed path constucted as follows. Suppose we've given $\alpha$ the orientation used above, and an orientation to the paths $c_i$ which coincides. The path is given by the concatenation of $c_1$ to the path on $\alpha$ between the endpoint of $c_1$ and the initial point of $c_2$, then the path $c_2$, and so forth until the path on $\alpha$ between the endpoint of $c_k$ and the initial point of $c_1$. The homotopy class $\gamma$ thus constructed is clearly essential and disjoint from  both $\alpha$ and the curves of $\mathcal P$. Thus is must be contained in $\mathcal P$. 
Finally it separates $\alpha$ from $\beta$ and as such it satisfies the condition (\ref{lem:3}). This proves the lemma.
\end{proof}

We can now construct the example of a hairy sphere and the previous lemma will ensure that any pants decomposition of this surface contains at least one long curve.\\

The basic building block is the same one as for the hairy torus, i.e., a hyperbolic cylinder with one short boundary geodesic and with the other boundary curve a piecewise geodesic as follows. Take a hyperbolic right-angled pentagon with two consecutive sides of equal length and such that the only side which does not touch these two sides has length $\ell/4$ with $\ell\geq 0$ a free parameter. Using the trigonometric formula for the right angled pentagon, one obtains that the two sides of equal length have length 
$$
x_0=2 \arcsinh\sqrt{\cosh \frac{\ell}{8}}.
$$
Paste four isometric copies of these to obtain a cylinder with one boundary curve a geodesic of length $\ell$ and the other is a right-angled quadrilateral with side length $2x_0$. For any integer $p$, as in the hairy torus example, paste together $p^2$ of these cylinders to form a ``hairy" square. Now take two copies of one of these squares and paste them together along an equal side to obtain a ``hairy rectangle" of side lengths $4p x_0$ and $2p x_0$. Next, paste together the two sides of the rectangle of length $2p x_0$ to obtain a ``hairy cylinder". The long boundary geodesics of the cylinder we shall denote $\alpha$ and $\beta$ for future use. Now take two pairs of pants of boundary lengths $\ell,\ell, 4p x_0$ and paste them (the twist parameter doesn't matter) to $\alpha$ and $\beta$ along the two long sides to obtain a hairy sphere with $2p^2+4$ boundary geodesics of length $\ell$. Denote this sphere $S_{p,\ell}$. We can now show the following.

\begin{proposition}\label{prop:hairy}
Any pants decomposition of $S_{p,\ell}$ has a curve of length at least $4p x_0$.
\end{proposition}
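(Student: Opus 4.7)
The plan is to apply Lemma~\ref{lem:top} to the pair of disjoint essential simple closed curves $(\alpha,\beta)$ on $S_{p,\ell}$: each bounds a pair of pants by construction, and $n = 2p^2 + 4 > 5$ for $p \geq 1$. The lemma then produces a curve $\gamma \in \PP$ satisfying one of three conditions, and in each case I aim to show that $\ell(\gamma) \geq 4p x_0$.

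Case (i), where $\gamma \in \{\alpha,\beta\}$, is immediate by construction. For case (ii), where $\gamma$ crosses both $\alpha$ and $\beta$, I would decompose $\gamma$ at its intersections with $\alpha \cup \beta$: since $\gamma$ is a connected simple closed curve visiting both pair-of-pants regions (attached at $\alpha$ and $\beta$), the cyclic sequence of regions it traverses must include both an $\alpha$-pants and a $\beta$-pants visit, forcing at least two arcs of $\gamma \cap R$ with one endpoint on $\alpha$ and the other on $\beta$. Each such arc has length at least $d_R(\alpha,\beta) \geq 2p x_0$, the ``pre-roll'' height of the hairy rectangle read off from the pentagonal grid ($p$ rows of cells, each contributing $2x_0$ vertically), so $\ell(\gamma) \geq 4p x_0$.

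Case (iii) is the most delicate. Here $\gamma$ is disjoint from $\alpha \cup \beta$ and separates them in $S_{p,\ell}$; since no simple closed curve in a pair-of-pants region can separate $\alpha$ from $\beta$, $\gamma$ must lie in the hairy cylinder $R$ and wind once around it. To avoid worrying about which subset of hairs is enclosed on each side, I would pass to the $\Z$-cover $\bar R \to R$ associated to the winding-number homomorphism $\pi_1(R) \to \Z$. A lift $\tilde \alpha$ of $\alpha$ is a complete geodesic line in $\bar R$, preserved by the deck transformation $T$, which acts as a translation of translation length $\ell(\alpha) = 4p x_0$ along $\tilde \alpha$. Composing the $1$-Lipschitz orthogonal projection $\bar R \to \tilde \alpha$ with signed arc length along $\tilde \alpha$ yields a $1$-Lipschitz function $\pi\colon \bar R \to \R$ satisfying $\pi \circ T = \pi + 4p x_0$. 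Since $\gamma$ has winding number $\pm 1$, it lifts in $\bar R$ to an arc from some $x$ to $T(x)$, so the $1$-Lipschitz bound gives $\ell(\gamma) \geq |\pi(T(x)) - \pi(x)| = 4p x_0$.

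The main obstacle will be justifying the two geometric inputs $d_R(\alpha,\beta) \geq 2p x_0$ and $\ell(\alpha) = 4p x_0$: both should reduce to careful bookkeeping inside the pentagonal grid, since $\alpha$ is the concatenation of $2p$ pentagon edges of length $2x_0$ meeting at straight angles, and any minimizing path from $\alpha$ to $\beta$ must cross all $p$ horizontal grid geodesics stacked with cell-height $2x_0$. A subsidiary concern in case (iii) is ensuring the orthogonal projection onto $\tilde\alpha$ is globally defined despite the hair boundaries of $\bar R$; this is handled by regarding each connected component of $\bar R$ as embedded in a convex domain of $\mathbb{H}^2$ (via a developing map), in which orthogonal projection onto a geodesic is always well-defined and $1$-Lipschitz.
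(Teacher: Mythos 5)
Your reduction via Lemma~\ref{lem:top} and your treatment of cases (i) and (ii) coincide with the paper's proof: case (i) is immediate, and in case (ii) the paper likewise extracts two disjoint subarcs of $\gamma$ running from $\alpha$ to $\beta$ inside the hairy cylinder $R$, each of length at least $d_R(\alpha,\beta)\geq 2px_0$. The divergence, and the problem, is in case (iii).

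Your case (iii) argument has a genuine gap. The $\Z$-cover $\bar R$ is \emph{not} simply connected: every hair of $R$ lifts to infinitely many hairs of $\bar R$, each contributing a nontrivial loop. Consequently $\bar R$ cannot be realized as (or developed into) a convex domain of $\Hyp^2$, and it is not CAT(0), so the nearest-point projection onto $\tilde\alpha$ is neither single-valued nor $1$-Lipschitz. Concretely, a point admitting two competing shortest paths to $\tilde\alpha$ that go around opposite sides of a hair (or of a whole column of hairs) has two footpoints that may be far apart along $\tilde\alpha$; perturbing the point makes the footpoint jump, so your function $\pi$ is not even continuous, and the key estimate $\ell(\gamma)\geq|\pi(T(x))-\pi(x)|$ collapses. (The distance function $d(\cdot,\tilde\alpha)$ \emph{is} $1$-Lipschitz, but it is $T$-invariant rather than $T$-equivariant, so it cannot detect the winding.) The paper's argument for this case avoids covers entirely: the $2p$ ``vertical'' geodesic segments joining $\alpha$ to $\beta$ (one per column of cells) must each be crossed by any curve separating $\alpha$ from $\beta$; hence $\gamma$ must traverse each of the $2p$ columns from one bounding segment to the other, and each such traversal costs at least $2x_0$, giving $\ell(\gamma)\geq 2p\cdot 2x_0=4px_0$. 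You should replace your covering-space argument by this crossing-number count (or supply a genuinely $1$-Lipschitz, $T$-equivariant map to $\R$, which your orthogonal projection is not).
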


\begin{proof}
The result essentially follows from lemma \ref{lem:top}. Fix a pants decomposition $\mathcal P$ of $S_{p,\ell}$. The lemma tells us that $\mathcal P$ contains a curve $\gamma$ that is either equal to $\alpha$ or $\beta$ (both of length $4p x_0$), crosses both $\alpha$ and $\beta$ or separates $\alpha$ from $\beta$.\\

Suppose $\gamma$ crosses both $\alpha$ and $\beta$. Then $\gamma$ contains at least two geodesic distinct sub-paths that go from $\alpha$ to $\beta$ (because both $\alpha$ and $\beta$ are separating curves). Now any geodesic path from $\alpha$ to $\beta$ has length at least $2p x_0$.\\

\begin{figure}[h]
\leavevmode \SetLabels
\L (0.47*.71) $\alpha$\\
\L (.73*.45) $\beta$\\
\L (0.03*.45) $\alpha$\\
\L (.32*.45) $\beta$\\
\endSetLabels
%\ShowGrid
\par
\begin{center}
\AffixLabels{\centerline{\epsfig{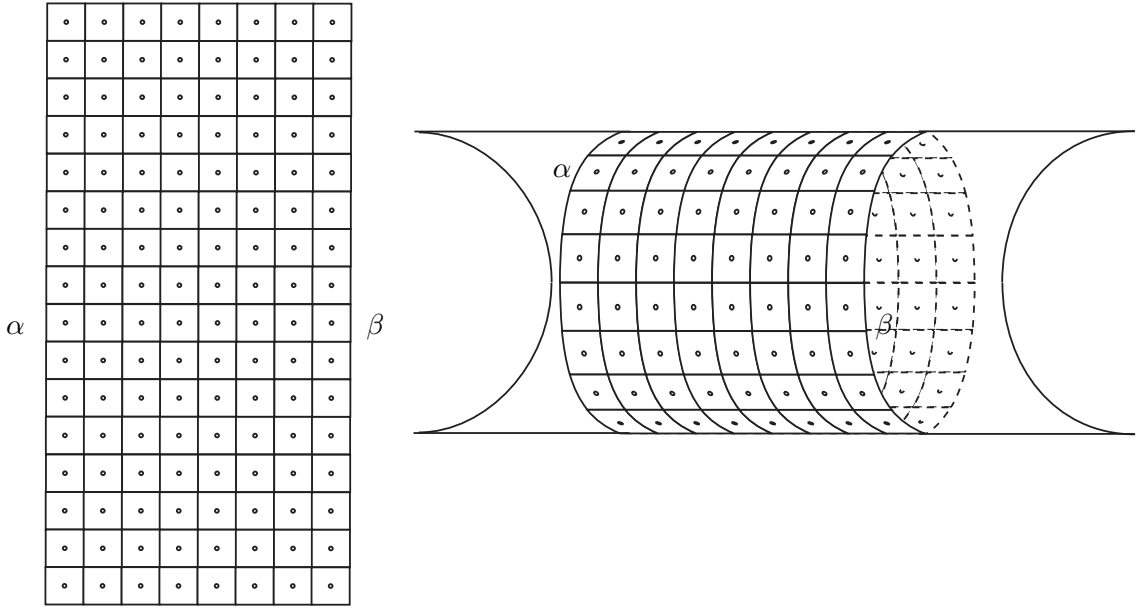}}}
\end{center}
\caption{The hairy rectangle and the hairy sphere}
\label{fig:hairy}
\end{figure}

Finally suppose $\gamma$ is a curve disjoint from both $\alpha$ and $\beta$ that separates $\alpha$ and $\beta$. Consider the ``horizontal" geodesic lines of figure \ref{fig:hairy} which become geodesic segments on $S_{p,\ell}$ which join $\alpha$ to $\beta$. The curve $\gamma$ must cross all of these lines otherwise it does not separate $\alpha$ from $\beta$. Now the minimum distance between any two successive horizontal lines is $2 x_0$. As $\gamma$ passes through two successive horizontal lines at least $2p$ times, its length is at least $4p x_0$. This proves the proposition.\end{proof}

As an immediate corollary to the above, we obtain lower bounds on Bers' constants for punctured spheres.

\begin{corollary}\label{cor:lowerNspheres}
For  $n>4$, Bers' constants $\BB_{0,n}$ satisfy the following inequality:
$$
\BB_{0,n} > 8 \arcsinh 1 (\sqrt{\frac{n-4}{2}}-1).
$$
\end{corollary}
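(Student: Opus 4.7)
The plan is to combine Proposition \ref{prop:hairy} with Property \ref{property:maxbers3}(a) and a continuity step that relates the hairy sphere with short boundaries to its cusped limit. If $n\in\{5,6\}$ then the right-hand side $8\arcsinh 1\,(\sqrt{(n-4)/2}-1)$ is non-positive, so the inequality is trivial. For $n\geq 7$, set $p:=\lfloor\sqrt{(n-4)/2}\rfloor\geq 1$, so that $2p^2+4\leq n$ and $p>\sqrt{(n-4)/2}-1$ (strict in every case, either because the floor equals its argument or decreases it by strictly less than $1$).

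The key step is to prove $\BB_{0,2p^2+4}\geq 8p\arcsinh 1$. For each $\ell>0$, Proposition \ref{prop:hairy} applied to the hairy sphere $S_{p,\ell}$ (with $2p^2+4$ boundary geodesics of length $\ell$) gives $\BB(S_{p,\ell})\geq 4p\,x_0(\ell)$ where $x_0(\ell)=2\arcsinh\sqrt{\cosh(\ell/8)}$. As $\ell\to 0$ the surfaces $S_{p,\ell}$ degenerate by pinching their boundary geodesics to cusps, producing a limit punctured sphere $S_p^\infty\in\MM_{0,2p^2+4}$. Since lengths of interior simple closed geodesics vary continuously in this degeneration and $x_0(\ell)\to 2\arcsinh 1$, one obtains
$$
\BB(S_p^\infty)\geq \lim_{\ell\to 0} 4p\,x_0(\ell) = 8p\arcsinh 1,
$$
hence $\BB_{0,2p^2+4}\geq 8p\arcsinh 1$.

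To conclude, Property \ref{property:maxbers3}(a) iterated $n-(2p^2+4)$ times gives $\BB_{0,n}\geq\BB_{0,2p^2+4}$, with strict inequality as soon as $n>2p^2+4$. In the case $n>2p^2+4$ this yields $\BB_{0,n}>8p\arcsinh 1\geq 8\arcsinh 1\,(\sqrt{(n-4)/2}-1)$. In the remaining case $n=2p^2+4$ the equality $\sqrt{(n-4)/2}=p$ holds, so $\BB_{0,n}\geq 8p\arcsinh 1 = 8\arcsinh 1\,\sqrt{(n-4)/2}>8\arcsinh 1\,(\sqrt{(n-4)/2}-1)$, the strictness coming directly from the extra $-1$ in the bound.

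The main obstacle is justifying the continuity of the Bers' constant under the degeneration $\ell\to 0$; this rests on the continuity of simple closed geodesic length functions on the augmented Teichm\"uller space. An alternative bypassing any limit is to carry out the hairy sphere construction directly with cusps: each right-angled pentagonal building block is replaced by a right-angled quadrilateral with three right angles and one ideal vertex, the piecewise geodesic boundary still has edges of length $2\arcsinh 1$, and the topological arguments of Lemma \ref{lem:top} and Proposition \ref{prop:hairy} carry over verbatim to give $\BB(S_p^\infty)\geq 8p\arcsinh 1$ with no need to pass to a limit.
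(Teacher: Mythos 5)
Your proposal is correct, and its core is the same as the paper's: apply Proposition \ref{prop:hairy} to the cusped hairy sphere $S_{p,\ell=0}$ (where $x_0=2\arcsinh 1$) to get a surface in $\MM_{0,2p^2+4}$ on which every pants decomposition has a curve of length at least $8p\arcsinh 1$. Your ``alternative bypassing any limit'' is in fact exactly what the paper does --- the building block is defined for $\ell\geq 0$, and at $\ell=0$ the pentagon degenerates to a quadrilateral with an ideal vertex --- so the continuity-in-$\ell$ step you flag as the main obstacle is unnecessary rather than problematic. The one genuine difference is how you pass from $n=2p^2+4$ to general $n$: the paper builds an explicit $n$-cusped surface by opening one cusp of $S_{p,0}$ into a boundary geodesic of length $\varepsilon$ and gluing on a $k$-cusped sphere, then invokes the collar theorem (and an a priori upper bound on $\BB_{0,n}$) to force that tiny geodesic into any short pants decomposition; you instead iterate the monotonicity $\BB_{0,n+1}>\BB_{0,n}$ of Property \ref{property:maxbers3}(a). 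Your route is cleaner and avoids the $\varepsilon$-dependence, at the cost of leaning on Property \ref{property:maxbers3}(a), whose proof is itself a gluing argument of the same flavour; your explicit treatment of the trivial cases $n\in\{5,6\}$ and of the boundary case $n=2p^2+4$ is also slightly more careful than the paper's.
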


\begin{proof}
One begins by writing $n= 2p^2+4+k$ where $k$ is the smallest non-negative integer satisfying this equality, i.e., $n\geq 2p^2+4$ and $n < 2(p+1)^2+4$. Thus we have

$$
p > \sqrt{\frac{n-4}{2}}-1.
$$

If $k=0$, then we construct the surface $S_{p,\ell=0}$ as above and we obtain the (slightly stronger) inequality:

$$
\BB_{0,n}\geq  8 p \arcsinh 1 = 8  \arcsinh 1  \sqrt{\frac{n-4}{2}}.
$$

For $k>0$ we take the surface $S_{p,\ell=0}$ as above except for one of the two pairs of pants which we attached to the ``hairy cylinder". On one of the pair of pants, say the one attached to $\alpha$, we choose one of the cusps to be a genuine boundary curve of length $\varepsilon$. Denote any surface obtained this way $\tilde{S}$. Now consider any sphere $\hat{S}$ with $k$ cusps and one boundary geodesic of length $\varepsilon$. We can now glue $\tilde{S}$ to $\hat{S}$ along their boundary geodesics of length $\varepsilon$ (the twist parameter is unimportant). For any $\varepsilon$, one can obtain such a sphere with $n$ cusps. Using the collar theorem, we now choose a very small $\varepsilon$ so that any reasonable pants decomposition contains the geodesic of length $\varepsilon$. (Note that the value of $\varepsilon$ is, a priori, genus dependent, and implicitly we've used the fact that an upper-bound for $\BB_{0,n}$ exists.) Denote any sphere obtained this way by $S$. Note that ${\mathcal B}(S) > {\mathcal B}(S_{p,0}).$ By the proposition above, we thus have a sphere $S$ with $n$ cusps such that any pants decomposition contains a curve of length $8p \arcsinh 1$. But because $p> \sqrt{\frac{n-4}{2}}-1$, we obtain the desired inequality.

\end{proof}

Using the same method as above, only now using boundary curves of length $\ell\geq 0$, we can use the proposition to find a lower bound on $\BB_{0,n,\ell}$.

\begin{corollary} 
For  $n>4$, Bers' constants $\BB_{0,n,\ell}$ satisfy the following inequality:
$$
\BB_{0,n,\ell} > 8 \arcsinh\sqrt{\cosh \frac{\ell}{8}} (\sqrt{\frac{n-4}{2}}-1)> \frac{\sqrt{n} \ell}{2\sqrt{2}}.
$$
\end{corollary}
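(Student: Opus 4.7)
The plan is to mimic the proof of Corollary \ref{cor:lowerNspheres} almost verbatim, substituting length-$\ell$ boundary geodesics for cusps in the hairy sphere construction of Section \ref{sec:ex}. Proposition \ref{prop:hairy} has already been established for $S_{p,\ell}$ with $2p^2+4$ boundary geodesics of length $\ell$, and it produces a lower bound on $\BB(S_{p,\ell})$ of $4p x_0 = 8p \arcsinh\sqrt{\cosh(\ell/8)}$; so the case $n = 2p^2+4$ is immediate.

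For general $n>4$, I would write $n = 2p^2 + 4 + k$ with $k\geq 0$ minimal, which gives $p > \sqrt{(n-4)/2}-1$. For $k=0$ we conclude directly. For $k>0$ I proceed as in Corollary \ref{cor:lowerNspheres}: modify one of the two outer pairs of pants of $S_{p,\ell}$ so that one of its length-$\ell$ boundaries is replaced by a boundary of length $\varepsilon$, call the result $\tilde{S}$; then glue $\tilde{S}$ along this $\varepsilon$-boundary to an auxiliary sphere $\hat{S}$ having $k$ boundaries of length at most $\ell$ together with one $\varepsilon$-boundary. For $\varepsilon$ small enough (depending on the already-established upper bound for $\BB_{0,n,\ell}$ from Theorem \ref{thm:boundarysphere}), the collar theorem forces any reasonably short pants decomposition to contain the $\varepsilon$-curve, so such a decomposition restricts to a pants decomposition of $\tilde{S}$. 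Since $\alpha$ and $\beta$ still bound pairs of pants on $\tilde{S}$ after the local modification, Lemma \ref{lem:top} applies verbatim, and the geometric estimates inside the hairy cylinder (distances between horizontal arcs, length of a path from $\alpha$ to $\beta$) are unaffected. Hence any pants decomposition of the glued sphere contains a curve of length at least $4p x_0$, which combined with $p > \sqrt{(n-4)/2}-1$ yields the first inequality.

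For the second inequality I would use the elementary identity $\cosh(\ell/8) = \cosh^2(\ell/16) + \sinh^2(\ell/16) \geq \sinh^2(\ell/16)$, which gives $\arcsinh\sqrt{\cosh(\ell/8)} \geq \ell/16$ and therefore
\[
8\arcsinh\sqrt{\cosh(\ell/8)}\bigl(\sqrt{(n-4)/2}-1\bigr) \geq \tfrac{\ell}{2}\bigl(\sqrt{(n-4)/2}-1\bigr),
\]
whose leading behavior is $\sim \frac{\ell\sqrt n}{2\sqrt 2}$; absorbing the correction terms $-\ell/2$ and $\ell(\sqrt{n-4}-\sqrt n)/(2\sqrt 2)$ into the positive contribution coming from the $(\ln 2)/2$ term in the sharper bound $\arcsinh\sqrt{\cosh(\ell/8)} > (\ln 2)/2 + \ell/16$ (obtained from $\arcsinh x > \ln(2x)$ applied at $x=\sqrt{\cosh(\ell/8)}$) yields the stated inequality in the regime where $n$ and $\ell$ are large enough.

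The only genuinely new ingredient compared to the cusp case is the substitution $\ell=0 \rightsquigarrow \ell>0$ in Section \ref{sec:ex}'s geometry, so no real obstacle remains: the topological mechanism (Lemma \ref{lem:top}) and the geometric mechanism (the distance $2x_0$ between consecutive horizontal arcs, with $x_0 = 2\arcsinh\sqrt{\cosh(\ell/8)}$) are both already in place. The most delicate point is checking that the $\varepsilon$-gluing for $k>0$ does not destroy the pair-of-pants hypothesis of Lemma \ref{lem:top} on $\alpha$ and $\beta$, but this is immediate since a pair of pants with boundary triple $(\ell,\ell,4px_0)$ remains topologically a pair of pants when one boundary length is changed to $\varepsilon$.
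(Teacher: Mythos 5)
Your proof of the first inequality is correct and is essentially identical to the paper's: the paper's entire argument for it is one sentence instructing the reader to rerun the proof of Corollary \ref{cor:lowerNspheres} with every cusp replaced by a boundary geodesic of length $\ell$, which is exactly what you do (and you supply more of the details -- the choice of $\varepsilon$ via the upper bound of Theorem \ref{thm:boundarysphere}, the check that Lemma \ref{lem:top} and Proposition \ref{prop:hairy} survive the local modification -- than the paper does). For the second inequality the paper gives no computation at all, calling it ``a standard (and very crude) algebraic manipulation'' included only to record the order of growth; your estimate $\arcsinh\sqrt{\cosh(\ell/8)}\geq \ell/16$ is the natural one, and your caveat that the comparison with $\sqrt{n}\,\ell/(2\sqrt{2})$ only goes through for $n$ and $\ell$ in a suitable regime is warranted rather than a gap on your part: as literally stated the second inequality fails for $n\leq 6$ (the middle term is then non-positive while the right-hand term is positive for $\ell>0$) and also for $\ell$ large compared to $\sqrt{n}$, since then both sides behave like $\tfrac{\ell}{2}$ times a square-root factor in $n$ but the middle term carries the strictly smaller factor $\sqrt{(n-4)/2}-1<\sqrt{n/2}$. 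This is a defect of the statement (which the paper tacitly concedes by calling the bound crude and of interest only for the order of growth), not of your argument.
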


\begin{proof}
The first inequality is the one obtained in the proof of the previous corollary when replacing the surfaces $\tilde{S}$ and $\hat{S}$ with surfaces where all cusps are replaced by boundary geodesics of length $\ell$. The second inequality is just a standard (and very crude) algebraic manipulation and of course is only of interest when $\ell$ is strictly positive.  We include it only to give the crude order of growth of our examples in order to show that the crude asymptotics of theorem \ref{thm:boundarysphere} are correct.
\end{proof}

As a third application of the proposition, we shall now derive a lower bound on the Bers' constant for closed hyperbolic surfaces. 

\begin{corollary}
Bers' constants $\BB_g$ satisfies $\BB_g > 4\arcsinh 1 (\sqrt{g-2}-1).$
\end{corollary}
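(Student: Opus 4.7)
The plan is to mimic the hairy sphere proof of Corollary~\ref{cor:lowerNspheres}, closing up the boundaries of $S_{p,\varepsilon}$ along thin identifying geodesics to raise the genus while leaving the hairy cylinder intact. Set $p=\lfloor\sqrt{g-2}\rfloor$, so that $p\geq 1$ whenever $g\geq 3$ and $p>\sqrt{g-2}-1$ (for $g=2$ the bound is already negative and trivially holds). Starting from $S_{p,\varepsilon}$, with its $2p^2+4$ boundary geodesics of length $\varepsilon$ and its two long geodesics $\alpha,\beta$ of length $4p x_0\geq 8p\,\arcsinh 1$ sitting inside the hairy cylinder, I would identify these $2p^2+4$ boundaries in pairs: when $g=p^2+2$ via $p^2+2$ direct pairwise gluings, and when $g>p^2+2$ via $p^2+1$ direct pairwise gluings together with one hyperbolic surface $T$ of genus $g-p^2-2\geq 1$ having two boundary geodesics of length $\varepsilon$ glued to the last pair of boundaries of $S_{p,\varepsilon}$. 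A short Euler-characteristic count shows that the resulting closed hyperbolic surface $\Sigma$ has genus exactly $g$; because the construction modifies the metric only in collar neighborhoods of these short boundaries, $\Sigma$ contains the hairy cylinder isometrically and still carries $\alpha,\beta$ as simple closed geodesics of length $4p x_0$.

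Next I would choose $\varepsilon$ so small that the collar theorem forces every simple closed geodesic in $\Sigma$ crossing one of the identifying geodesics to have length greater than $4p x_0$. Given a pants decomposition $\mathcal{P}$ of $\Sigma$ with $\ell(\mathcal{P})<4p x_0$, no curve of $\mathcal{P}$ can cross any identifying geodesic $\delta$. Since $\delta$ is an essential simple closed geodesic disjoint from $\mathcal{P}$, and a pants decomposition of a closed surface is a maximal collection of disjoint essential simple closed curves, $\delta$ is isotopic to some curve of $\mathcal{P}$; uniqueness of geodesic representatives in a given isotopy class then forces $\delta\in\mathcal{P}$ itself. Once every identifying geodesic lies in $\mathcal{P}$, cutting $\Sigma$ along them disconnects it into $S_{p,\varepsilon}$ and, when $g>p^2+2$, also $T$, and the restriction of $\mathcal{P}$ to the $S_{p,\varepsilon}$-piece together with its $2p^2+4$ short boundary components is a genuine pants decomposition of $S_{p,\varepsilon}$. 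Proposition~\ref{prop:hairy} then produces a curve in this restricted decomposition of length at least $4p x_0$; having taken $\varepsilon<4p x_0$ rules out this curve being one of the new short boundary components, so it must be a curve of $\mathcal{P}$, contradicting $\ell(\mathcal{P})<4p x_0$.

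It follows that $\BB(\Sigma)\geq 4p x_0\geq 8p\,\arcsinh 1$, and since $p>\sqrt{g-2}-1$ we conclude $\BB_g\geq\BB(\Sigma)>4\,\arcsinh 1\,(\sqrt{g-2}-1)$, which is in fact twice as strong as the stated bound. The main subtle point is the forcing step of the second paragraph---establishing that every identifying geodesic belongs to $\mathcal{P}$ whenever $\mathcal{P}$ is short enough---which relies simultaneously on the collar-theorem control on $\varepsilon$ and on the maximality property of pants decompositions together with the uniqueness of closed geodesics in a free homotopy class. The rest of the argument, including the topological bookkeeping of attaching $T$ to reach genus $g$ and checking that the restricted multicurve is actually a pants decomposition of $S_{p,\varepsilon}$, is routine.
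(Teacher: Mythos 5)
Your proposal is correct and follows essentially the same route as the paper: build a closed genus-$g$ surface by pairwise gluing the short boundaries of the hairy sphere $S_{p,\ell}$ (attaching an auxiliary two-holed surface to absorb the leftover genus when $g>p^2+2$), force the short identifying geodesics into any short pants decomposition via the collar theorem, and invoke Proposition \ref{prop:hairy} on the resulting pants decomposition of the hairy piece. Your more careful justification of the forcing step (maximality plus uniqueness of geodesic representatives) and your observation that the argument actually yields the constant $8\arcsinh 1$ both match what the paper's own proof does, so nothing further is needed.
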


\begin{proof}
Here we mimic the strategy of the previous corollary. We begin by noticing that if we use the example from proposition \ref{prop:hairy} with a choice of $\ell>0$ and gluing together the geodesics pairwise (neither the twist parameter or the choice of pairing matters), one obtains a closed surface of genus $p^2 + 2$.\\

We begin by writing $g= p^2+2+k$ with $g< (p+1)^2$ and $k\geq 0$. Thus $p>\sqrt{g-2}-1$.\\

As in the previous corollary, if $k=0$, then perform the above construction to obtain a surface of genus $g^2+2$ choosing $\ell$ very small so that, by the collar theorem again, any reasonably short pants decomposition contains the geodesics of length $\ell$.\\

If $k>0$, then perform the above the construction, only this time leave two curves of $S_{p,\ell}$ as boundary curves. Denote the surface thus obtained $\tilde{S}$ and note that it is of signature $(p^2+1,2)$. Now take any surface of signature $(k,2)$ with boundary lengths both equal to $\ell$ (say $\hat{S}$). Paste the two surfaces $\tilde{S}$ and $\hat{S}$ together along their boundary curves (once again, this can be done in anyway) to obtain a (closed) surface $S$ of genus $p^2+2+k=g$. Using proposition \ref{prop:hairy} we can conclude:
$$
{\mathcal B}(S)> {\mathcal B}(S_{p,\ell})> 8 p \arcsinh 1 > 8 \arcsinh 1 (\sqrt{g-2}-1).
$$
\end{proof}

Note that this lower bound is an improvement on the lower bound obtained by Buser in \cite{bubook}, although of course both grow like the square root of the genus. The constant in front of the square root in \cite{bubook} is $\sqrt{6} \approx 2.4$ whereas here the constant is $8 \arcsinh 1 \approx 7 $.\\

Finally, we are interested in deriving a lower bound for Bers' constants of hyperelliptic surfaces. Once again, this follows as a corollary of the above constructions.

\begin{corollary}
Bers' constants for closed hyperelliptic surfaces satisfy
$$
\BB^{hyp}_g > 4 \arcsinh 1 (\sqrt{\frac{g-3}{2}}-1).
$$
\end{corollary}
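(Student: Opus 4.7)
The plan is to construct an explicit hyperelliptic surface of genus $g$ realizing the lower bound, by a symmetric doubling of the hairy sphere $S_{p,\ell}$ for some small $\ell > 0$. I write $g = 2p^2 + 3 + k$ where $k \geq 0$ is the smallest non-negative integer making the equality hold, so that $p > \sqrt{(g-3)/2} - 1$. I then take two mirror copies of $S_{p,\ell}$ and glue them along each of the $2p^2 + 4$ pairs of corresponding boundary geodesics by an isometric involution of the circle with two antipodal fixed points (a half-twist identification). The resulting closed surface $\tilde{S}$ has Euler characteristic $2\chi(S_{p,\ell}) = -4p^2 - 4$, hence genus $2p^2+3$, and the obvious swap involution $\sigma$ has exactly $2(2p^2+4) = 2g+2$ isolated fixed points (two per glued circle); by Riemann-Hurwitz this certifies that $\sigma$ is a hyperelliptic involution and $\tilde{S}$ is hyperelliptic.

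If $k > 0$, I modify the construction as in the previous two corollaries: two symmetrically placed capping pairs of pants (one in each copy, interchanged by $\sigma$) are replaced by more complex pieces absorbing the extra $k$ genus, with the attaching boundary geodesics chosen short enough that the collar theorem forces any short pants decomposition to contain them. Hyperellipticity is preserved by maintaining $\sigma$-symmetry throughout, and the Bers constant does not decrease under this modification.

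For the main bound I consider the four disjoint simple closed geodesics $\alpha^1, \beta^1, \alpha^2, \beta^2 \subset \tilde{S}$ coming from the long interior curves of each copy of $S_{p,\ell}$, each of length $4p x_0$ where $x_0 = 2\arcsinh\sqrt{\cosh(\ell/8)}$. As in proposition \ref{prop:hairy}, any simple closed geodesic disjoint from a pants decomposition $\PP$ of $\tilde{S}$ must lie in some pair of pants of $\tilde{S}\setminus \PP$ and hence be homotopic to one of its boundaries; uniqueness of the geodesic representative then forces it into $\PP$. So either $\PP$ contains one of the four curves (yielding a pants curve of length $4p x_0$), or else each of them is crossed by some element of $\PP$. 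In the latter case, an adaptation of the geometric argument of proposition \ref{prop:hairy} applied to the hairy cylinder embedded in one copy $S^i$ of $S_{p,\ell}$ forces at least one crossing curve to be long; sending $\ell \to 0$ then yields $\BB(\tilde{S}) > 8p \arcsinh 1 > 4 \arcsinh 1 (\sqrt{(g-3)/2} - 1)$. The main obstacle will be this last step: I must rule out the possibility that a curve of $\PP$ crossing $\alpha^i$ or $\beta^i$ takes a shortcut by exiting $S^i$ through the short gluing geodesics $\delta_j$ into the other copy, and this requires a careful accounting of the intersections with the $\delta_j$'s together with a refined use of the hairy-cylinder geometry, analogous to but more delicate than the argument in proposition \ref{prop:hairy}.
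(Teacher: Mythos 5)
Your surface is essentially the paper's: gluing two mirror copies of $S_{p,\ell}$ along corresponding boundary geodesics with a half-twist, so that the swap involution has two fixed points on each gluing circle, is exactly the hyperelliptic double of the sphere with $2g+2$ cone points of angle $\pi$ obtained by folding each boundary geodesic of the hairy sphere onto itself at two antipodal points. The problem is that your lower-bound argument is not finished: you explicitly defer the decisive step, namely ruling out that a pants curve crossing $\alpha^i$ or $\beta^i$ escapes into the other copy through the short gluing geodesics $\delta_j$. That is a genuine gap, not a routine verification. Lemma \ref{lem:top} is stated for spheres and Proposition \ref{prop:hairy} bounds distances intrinsically inside one copy of $S_{p,\ell}$; on the closed surface a curve of $\PP$ meeting $\alpha^1$ could a priori leave $S^1$ through a $\delta_j$ before returning, and none of the stated estimates control its length. "A refined use of the hairy-cylinder geometry" is precisely the content that is missing.

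The missing idea, and the way the paper closes exactly this gap, is to choose $\ell$ so small (using the collar theorem together with the existence of an upper bound for $\BB_g$) that every pants decomposition of length at most $\BB_g$ must contain all the gluing geodesics of length $\ell$: a simple closed geodesic not belonging to $\PP$ must cross some curve of $\PP$, and any curve crossing a geodesic of length $\ell$ has length at least $2\arcsinh\frac{1}{\sinh(\ell/2)}$, which exceeds $\BB_g$ once $\ell$ is small enough. Cutting along these geodesics then splits the surface into the two copies of the hairy sphere with boundary, each of which must carry its own pants decomposition, and Proposition \ref{prop:hairy} applies verbatim to each copy, producing a curve of length at least $4px_0\geq 8p\arcsinh 1$; no analysis of shortcuts is needed. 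With this cutting step inserted (and with the $k>0$ adjustment carried out as in Corollary \ref{cor:lowerNspheres}, which you only sketch), your construction does yield the stated inequality.
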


\begin{proof}
For each $g$, consider $\ell$  that by the collar theorem any short pants decomposition of a surface of genus $g$ contains all geodesics of length $\ell$. Consider a sphere $S_{g+1,\ell}$ as constructed above with $g+1$ geodesics of length $\ell$. On each boundary geodesic of $S_{g,\ell}$, consider two diametrically opposite points and glue together the arcs between them to obtain a sphere with $2g+2$ cone points of angle $\pi$. By considering the hyperelliptic double of the sphere thus obtained, one obtains a hyperelliptic closed surface $S$ of genus $g$. By construction, the surface $S$ has $g+1$ simple closed geodesics of length $\ell$ that together separate $S$ into two copies of $S_{g+1,\ell}$. By the above discussion, any short pants decomposition of $S$ contains the geodesics of length $\ell$. The remainder of the pants decomposition consists of a pants decomposition of the two copies of $S_{g+1,\ell}$ and we know that any pants decomposition of $S_{g+1,\ell}$ has length at least $4 \arcsinh \,1 \sqrt{\cosh \frac{\ell}{8}} (\sqrt{\frac{g-3}{2}}-1)$. Thus any pants decomposition of $S$ has length at least $4 \arcsinh \,1(\sqrt{\frac{g-3}{2}}-1)$ and the corollary follows.
\end{proof}

%\bibliographystyle{plain}
%\bibliography{sample_ndthesis}
%\nocite{al99t} \nocite{sm01a} \nocite{fo70}
%\bibliography{../hugo}

\begin{thebibliography}{10}

\bibitem{abbook}
William Abikoff.
\newblock {\em The real analytic theory of {T}eichm\"uller space}, volume 820
  of {\em Lecture Notes in Mathematics}.
\newblock Springer, Berlin, 1980.

\bibitem{ak03}
Hugo Akrout.
\newblock Singularit\'es topologiques des systoles g\'en\'eralis\'ees.
\newblock {\em Topology}, 42(2):291--308, 2003.

\bibitem{al64}
F.~J. Almgren, Jr.
\newblock An isoperimetric inequality.
\newblock {\em Proc. Amer. Math. Soc.}, 15:284--285, 1964.

\bibitem{bps}
F.~Balacheff, H. Parlier and S.~Sabourau.
\newblock Short loop decompositions of surfaces and the geometry of Jacobians.
\newblock {\em Preprint}, 2010.


\bibitem{basadias}
F.~Balacheff and S.~Sabourau.
\newblock Diastolic inequalities and isoperimetric inequalities on surfaces.
\newblock {\em Ann. Sci. \'Ecole Norm. Sup.}, 43:579--605, 2010.

\bibitem{besp42}
S. Bergman and D.~C. Spencer.
\newblock A property of pseudoconformal transformations in the neighborhood of
  boundary points.
\newblock {\em Duke Math. J.}, 9:757--762, 1942.

\bibitem{be74}
Lipman Bers.
\newblock Spaces of degenerating {R}iemann surfaces.
\newblock In {\em Discontinuous groups and {R}iemann surfaces ({P}roc. {C}onf.,
  {U}niv. {M}aryland, {C}ollege {P}ark, {M}d., 1973)}, pages 43--55. Ann. of
  Math. Studies, No. 79. Princeton Univ. Press, Princeton, N.J., 1974.

\bibitem{be85}
Lipman Bers.
\newblock An inequality for {R}iemann surfaces.
\newblock In {\em Differential geometry and complex analysis}, pages 87--93.
  Springer, Berlin, 1985.

\bibitem{bes52}
A.~S. Besicovitch.
\newblock On two problems of {L}oewner.
\newblock {\em J. London Math. Soc.}, 27:141--144, 1952.

\bibitem{br03}
Jeffrey~F. Brock.
\newblock The {W}eil-{P}etersson metric and volumes of 3-dimensional hyperbolic
  convex cores.
\newblock {\em J. Amer. Math. Soc.}, 16(3):495--535 (electronic), 2003.

\bibitem{br99}
Robert Brooks.
\newblock Platonic surfaces.
\newblock {\em Comment. Math. Helv.}, 74(1):156--170, 1999.

\bibitem{buhab}
Peter Buser.
\newblock Riemannshe fl\"achen und l\"angenspektrum vom trigonometrishen
  standpunkt.
\newblock {\em Habilitation Thesis, University of Bonn}, 1981.

\bibitem{bubook}
Peter Buser.
\newblock {\em Geometry and spectra of compact {R}iemann surfaces}, volume 106
  of {\em Progress in Mathematics}.
\newblock Birkh\"auser Boston Inc., Boston, MA, 1992.

\bibitem{busa94}
Peter Buser and Peter Sarnak.
\newblock On the period matrix of a {R}iemann surface of large genus.
\newblock {\em Invent. Math.}, 117(1):27--56, 1994.
\newblock With an appendix by J. H. Conway and N. J. A. Sloane.

\bibitem{buse92}
Peter Buser and Mika Sepp{\"a}l{\"a}.
\newblock Symmetric pants decompositions of {R}iemann surfaces.
\newblock {\em Duke Math. J.}, 67(1):39--55, 1992.

\bibitem{ch70}
Jeff Cheeger.
\newblock A lower bound for the smallest eigenvalue of the {L}aplacian.
\newblock In {\em Problems in analysis ({P}apers dedicated to {S}alomon
  {B}ochner, 1969)}, pages 195--199. Princeton Univ. Press, Princeton, N. J.,
  1970.

\bibitem{padrcol}
Emily~B. Dryden and Hugo Parlier.
\newblock Collars and partitions of hyperbolic cone-surfaces.
\newblock {\em Geom. Dedicata}, 127:139--149, 2007.

\bibitem{gen08}
Matthieu Gendulphe.
\newblock La constante de {B}ers en genre $2$.
\newblock {\em Preprint}, 2008.

\bibitem{je84}
Felix Jenni.
\newblock \"{U}ber den ersten {E}igenwert des {L}aplace-{O}perators auf
  ausgew\"ahlten {B}eispielen kompakter {R}iemannscher {F}l\"achen.
\newblock {\em Comment. Math. Helv.}, 59(2):193--203, 1984.

\bibitem{kascvi07}
Mikhail~G. Katz, Mary Schaps, and Uzi Vishne.
\newblock Logarithmic growth of systole of arithmetic {R}iemann surfaces along
  congruence subgroups.
\newblock {\em J. Differential Geom.}, 76(3):399--422, 2007.

\bibitem{kee74}
Linda Keen.
\newblock Collars on {R}iemann surfaces.
\newblock In {\em Discontinuous groups and Riemann surfaces (Proc. Conf., Univ.
  Maryland, College Park, Md., 1973)}, pages 263--268. Ann. of Math. Studies,
  No. 79. Princeton Univ. Press, Princeton, N.J., 1974.

\bibitem{ke83}
Steven~P. Kerckhoff.
\newblock The {N}ielsen realization problem.
\newblock {\em Ann. of Math. (2)}, 117(2):235--265, 1983.

\bibitem{masmin00}
Howard~A. Masur and Yair~N. Minsky.
\newblock Geometry of the complex of curves. {II}. {H}ierarchical structure.
\newblock {\em Geom. Funct. Anal.}, 10(4):902--974, 2000.

\bibitem{mu71}
David Mumford.
\newblock A remark on {M}ahler's compactness theorem.
\newblock {\em Proc. Amer. Math. Soc.}, 28:289--294, 1971.

\bibitem{pa051}
Hugo Parlier.
\newblock Lengths of geodesics on {R}iemann surfaces with boundary.
\newblock {\em Ann. Acad. Sci. Fenn. Math.}, 30(2):227--236, 2005.

\bibitem{sc931}
P.~Schmutz.
\newblock Riemann surfaces with shortest geodesic of maximal length.
\newblock {\em Geom. Funct. Anal.}, 3(6):564--631, 1993.

\bibitem{sc98}
P. Schmutz~Schaller.
\newblock Geometry of {R}iemann surfaces based on closed geodesics.
\newblock {\em Bull. Amer. Math. Soc. (N.S.)}, 35(3):193--214, 1998.

\bibitem{thspine}
William Thurston.
\newblock A spine for {T}eichm\"uller space.
\newblock {\em Preprint}.

\end{thebibliography}

\bibliographystyle{plain}
\def\cprime{$'$}

\end{document}